\theoremstyle{plain}
\newtheorem{definition}{Definition}[section]
\newtheorem{thm}[definition]{Theorem}
\newtheorem{lem}[definition]{Lemma}
\newtheorem{rem}[definition]{Remark}
\let\Re\relax
\DeclareMathOperator{\Re}{Re}
\let\Im\relax
\DeclareMathOperator{\Im}{Im}
\title{Integer circulant determinants of order $16$}
\author{Yuka Yamaguchi and Naoya Yamaguchi}
\begin{document}

\maketitle

\begin{abstract}
We solve Olga Taussky-Todd's circulant problem in the case of order $16$. 
\end{abstract}

\section{Introduction}
For a positive integer $n$, 
the determinant 
$$
D(a_{0}, a_{1}, \ldots, a_{n - 1}) := \det{\begin{pmatrix} a_{0} & a_{n - 1} & \cdots & a_{1} \\ a_{1} & a_{0} & \cdots & a_{2} \\ \vdots & \vdots & \ddots & \vdots \\ a_{n - 1} & a_{n - 2} & \cdots & a_{0} \end{pmatrix}}
$$
is called a circulant determinant of order $n$. 
When all entries $a_{i}$ are integers, 
the circulant determinant is called an integer circulant determinant. 
Let $S(n)$ denote the set of all possible values of integer circulant determinants of order $n$: 
$$
S(n) := \left\{ D(a_{0}, a_{1}, \ldots, a_{n - 1}) \: \mid \: a_{i} \in \mathbb{Z} \right\}. 
$$
The following is well-known: 
$$
S(2) = \left\{ a_{0}^{2} - a_{1}^{2} \mid a_{0}, a_{1} \in \mathbb{Z} \right\} = \mathbb{Z}_{{\rm odd}} \cup 4 \mathbb{Z}, 
$$
where $\mathbb{Z}_{{\rm odd}}$ denotes the set of all odd numbers. 
In 1977, at the meeting of the American Mathematical Society, Olga Taussky-Todd proposed a problem that is to determine $S(n)$ for any $n$. 
We call the problem Olga Taussky-Todd's circulant problem. 
A few years later, 
in the case of $n = p$, $2 p$ and $p^{2}$ with $p$ is an odd prime, 
the problem was solved \cite{MR624127}, \cite{MR601702}, \cite{MR550657}. 
Further, in 2012, Kaiblinger \cite[Theorem~1.1]{MR2914452} presented the following: 
\begin{enumerate}
\item[\scriptsize$\bullet$] $S(4) = \mathbb{Z}_{{\rm odd}} \cup 16 \mathbb{Z}$; 
\item[\scriptsize$\bullet$] $S(8) = \mathbb{Z}_{{\rm odd}} \cup 32 \mathbb{Z}$; 
\item[\scriptsize$\bullet$] $\mathbb{Z}_{{\rm odd}} \cup 2^{2 k - 1} \mathbb{Z} \subset S(2^{k}) \subset \mathbb{Z}_{{\rm odd}} \cup 2^{k + 2} \mathbb{Z}, \quad k \geq 4$. 
\end{enumerate}
These results tempt to  ask a question whether $S(16) = \mathbb{Z}_{{\rm odd}} \cup 64 \mathbb{Z}$. 
This paper gives a negative answer to the question as follows. 

\begin{thm}\label{thm:1.1}
The following holds: 
\begin{align*}
S(16) = \mathbb{Z}_{{\rm odd}} &\cup 128 \mathbb{Z} \\ 
&\cup \left\{ 64 p m \mid p = a^{2} + b^{2} \equiv 1, \: a + b \equiv 3, 5 \: \: ({\rm mod \: 8}), \: m \in \mathbb{Z} \right\} \\ 
&\cup \left\{ 64 p m \mid p \equiv 5 \: \: ({\rm mod \: 8}), \: m \in \mathbb{Z} \right\} \\ 
&\cup \left\{ 64 p^{2} m \mid p \equiv 3 \: \: ({\rm mod \: 8}), \: m \in \mathbb{Z} \right\}, 
\end{align*}
where $p$ denotes a prime. 
\end{thm}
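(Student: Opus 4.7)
The plan is to exploit the classical factorization
\[
D(a_0, a_1, \ldots, a_{15}) = \prod_{k=0}^{15} f(\zeta^k) = \prod_{d \mid 16} N_d(f),
\]
where $\zeta = e^{2\pi i/16}$, $f(x) = a_0 + a_1 x + \cdots + a_{15} x^{15}$, and $N_d(f) := \prod_{\xi} f(\xi)$ with $\xi$ ranging over primitive $d$-th roots of unity, so that $N_d(f) = N_{\mathbb{Q}(\zeta_d)/\mathbb{Q}}(f(\zeta_d))$. Thus every element of $S(16)$ is a product of the five norms coming from $d = 1, 2, 4, 8, 16$. The inclusion $\mathbb{Z}_{\mathrm{odd}} \subset S(16)$ is realized trivially by $f \equiv a_0$, and Kaiblinger's theorem gives $S(16) \subset \mathbb{Z}_{\mathrm{odd}} \cup 64\mathbb{Z}$, so the work splits into (i) producing each listed multiple of $64$ and (ii) showing that no other multiples of $64$ occur.

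For the inclusion $\supseteq$ I would build explicit polynomials $f$ whose five norms realize each listed type. Multiples of $128$ can be obtained by a direct adaptation of Kaiblinger's construction, with the extra factor of $2$ absorbed into one of the norms $f(\pm 1)$. For the $64p$-type with $p \equiv 5 \pmod{8}$, writing $p = a^2 + b^2$ and choosing $f$ with $f(i) = a + bi$ gives $N_{\mathbb{Q}(i)/\mathbb{Q}}(f(i)) = p$, and one arranges the remaining factors to contribute exactly $2^6$; the congruence $p \equiv 5 \pmod{8}$ controls the residue $a+b \pmod 8$ automatically, whereas in the case $p \equiv 1 \pmod 8$ the extra freedom must be constrained to force $a + b \equiv 3, 5 \pmod{8}$. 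For $p \equiv 3 \pmod{8}$ the prime is inert in $\mathbb{Z}[i]$, so the minimal contribution at the $d = 4$ place is $p^2$, which explains the squared factor. Finally the arbitrary integer $m$ is introduced by replacing $a_0$ with $a_0 + 16 m'$, whose effect is to scale one factor by $m$ once the others have been fixed.

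The main obstacle is the reverse inclusion. Given $D \in 64\mathbb{Z} \cap S(16)$ with $v_2(D) = 6$ (the case $v_2(D) \geq 7$ lands in $128\mathbb{Z}$, already allowed), the value of $v_2$ must be distributed across the five factors $N_d(f)$ according to the ramification of $2$ in $\mathbb{Q}(\zeta_d)$, and one needs to show that the odd part $u = D/64$ necessarily has the structure described in the theorem. The decisive local computation takes place in $\mathbb{Z}[\zeta_{16}]$, where $\pi := 1 - \zeta_{16}$ is a uniformizer above $2$ with ramification index $8$; the coefficient identification in $\mathbb{Z}[x]$ links the residues of $f(\zeta_{16}), f(\zeta_8), f(i), f(-1), f(1)$ modulo small powers of $\pi$. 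Tracking these linked congruences together with the behavior of odd primes in $\mathbb{Z}[\zeta_8]$ (split, inert, or Fermat-split) forces the listed trichotomy on $p \pmod 8$ together with the refined condition $a + b \equiv 3, 5 \pmod 8$ for $p \equiv 1 \pmod 8$. This Galois-theoretic and 2-adic bookkeeping, rather than the explicit constructions of the first step, is where I expect the essential difficulty to lie.
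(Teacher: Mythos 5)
Your overall architecture coincides with the paper's: factor $D$ as $\prod_{d\mid 16}N_d$, realize the listed values by explicit coefficient vectors, and rule out the rest by $2$-adic congruences among the $N_d$. However, both halves are left at the level of intention, and in each case the step you defer is exactly the one that carries the content. On the construction side, the five norms cannot be tuned independently: the paper shows that $N_1\equiv N_2\equiv N_4\equiv N_8\equiv N_{16}\pmod 2$ and that $N_4N_8N_{16}\in\mathbb{Z}_{{\rm odd}}\cup 16\mathbb{Z}$, so ``choose $f$ with $f(i)=a+bi$ and arrange the remaining factors to contribute exactly $2^6$'' is precisely the delicate part; the paper has to exhibit very particular $16$-tuples (its Lemma 5.2) realizing $32\{(8k+3)^2+(8l+1)^2\}$, $64\{(4k-1)^2+2(4l-1)^2\}^2$, and a third quadratic form. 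Your device for the factor $m$ also does not work as stated: replacing $a_0$ by $a_0+16m'$ perturbs all five norms simultaneously, not one of them. The paper instead gets $64pm$ from multiplicativity of $S(16)$ (odd $m$ lies in $S(16)$; even $m$ puts $64pm$ into $128\mathbb{Z}\subset S(16)$). Your claim that $p\equiv 5\pmod 8$ ``controls $a+b$ automatically'' is true but needs the observation that for such $p$ one of $a\pm b$ is always $\equiv\pm 3\pmod 8$; and your explanation of the square for $p\equiv 3\pmod 8$ (inertness in $\mathbb{Z}[i]$) only accounts for the factors $N_1,N_2,N_4$, whereas $N_8N_{16}=f(1)f(-1)$ is a difference of two squares and can absorb a single such prime, so the impossibility of $64q$ is not explained by inertness alone.

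The essential gap is the exclusion argument. The paper's mechanism is: if $D\in 64\mathbb{Z}_{{\rm odd}}$ then necessarily $N_1,N_2,N_4\in 2\mathbb{Z}_{{\rm odd}}$ and $N_8N_{16}\in 8\mathbb{Z}_{{\rm odd}}$, hence $N_4\equiv 2$ and $N_8N_{16}\equiv 8\pmod{16}$; on the other hand, writing $N_1=\alpha_1\overline{\alpha_1}$ and $N_2=\alpha_2\overline{\alpha_2}$ with $\alpha_1,\alpha_2\in\mathbb{Z}[\sqrt{-1}]$, if the odd parts of $N_1,N_2$ are built only from primes $p\equiv 7\pmod 8$ and primes $p=a^2+b^2\equiv 1\pmod 8$ with $a\pm b\equiv\pm 1\pmod 8$, then $\Re(\alpha_j),\Im(\alpha_j)\equiv\pm 1\pmod 8$ (a multiplicative statement about Gaussian integers, the paper's Lemma 4.8), and explicit mod-$8$ expansions of $\Re(\alpha_1),\Im(\alpha_1),\Re(\alpha_2),\Im(\alpha_2)$ in the coefficients then force $N_4\equiv N_8N_{16}+2\pmod{16}$, a contradiction. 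Your proposed bookkeeping via the uniformizer $1-\zeta_{16}$ and the splitting of odd primes in $\mathbb{Z}[\zeta_8]$ cannot detect the dichotomy $a+b\equiv\pm 1$ versus $\pm 3\pmod 8$: every prime $\equiv 1\pmod 8$ splits completely in $\mathbb{Q}(\zeta_8)$, and the relevant condition is a congruence on the Gaussian factors of $p$, not a decomposition-group invariant of any field $\mathbb{Q}(\zeta_{2^k})$. Without an analogue of Lemma 4.8 and the accompanying polynomial congruences linking $\alpha_1,\alpha_2$ to $N_4$ and $N_8N_{16}$ modulo $16$, the reverse inclusion does not go through.
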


The group determinant can be regarded as a generalization of the circulant determinant, 
where, for a finite group $G$, 
assigning an indeterminate $x_{g}$ for each $g \in G$, 
the group determinant was defined as $\det{\left( x_{g h^{- 1}} \right)}_{g, h \in G}$. 
When $G$ is the cyclic group of order $n$, 
the group determinant becomes the circulant determinant of order $n$. 
Thus Olga Taussky-Todd's circulant problem is naturally generalized to the problem that is to determine the set 
$$
S(G) := \left\{ \det{\left( x_{g h^{- 1}} \right)}_{g, h \in G} \mid x_{g} \in \mathbb{Z} \right\}
$$
for any finite group $G$. 
Recently, several results were obtained \cite{MR4363104}, \cite{MR4313424}, \cite{MR3998922}, \cite{MR4056860}. 
In particular, as mentioned in \cite{MR4363104}, 
for every group $G$ of order at most $15$, 
$S(G)$ was determined. 
Theorem~$\ref{thm:1.1}$ determines $S(G)$ in the case that $G$ is the cyclic group of order $16$, 
which is one of the smallest unsolved group. 

\section{Preliminaries}

Let $\zeta_{n}$ be a primitive $n$-th root of unity. 
It is well-known that 
\begin{align*}
D(a_{0}, a_{1}, \ldots, a_{15}) = \prod_{l = 0}^{15} f(\zeta_{16}^{l}), \quad f(x) := \sum_{k = 0}^{15} a_{k} x^{k} \in \mathbb{Z}[x]. 
\end{align*}
In the same way as in \cite{MR4363104}, 
we factorize the right-hand side as a product of integer norms 
\begin{align*}
D(a_{0}, a_{1}, \ldots, a_{15}) = \prod_{d \mid 16} N_{d}, \quad N_{d} := \prod_{\gcd(l, 16) = d} f(\zeta_{16}^{l}) \in \mathbb{Q} \cap \mathbb{Z}[\zeta_{16}] = \mathbb{Z}. 
\end{align*}
Let 
\begin{align*}
b_{k} &:= (a_{k} + a_{k + 8}) + (a_{k + 4} + a_{k + 12}), && 0 \leq k \leq 3, \\ 
c_{k} &:= (a_{k} + a_{k + 8}) - (a_{k + 4} + a_{k + 12}), && 0 \leq k \leq 3, \\ 
\hspace{3cm} d_{k} &:= (a_{k} - a_{k + 8}) + \sqrt{- 1} (a_{k + 4} - a_{k + 12}), && 0 \leq k \leq 3, \hspace{3cm}\\ 
e_{k} &:= a_{k} - a_{k + 8}, && 0 \leq k \leq 7.  
\end{align*}

\begin{rem}\label{rem:2.1}
The following three conditions are equivalent: 
\begin{enumerate}
\item[$(1)$] $b_{0} \not\equiv b_{2}, \: b_{1} \not\equiv b_{3} \pmod{2}$; 
\item[$(2)$] $c_{0} \not\equiv c_{2}, \: c_{1} \not\equiv c_{3} \pmod{2}$; 
\item[$(3)$] $e_{0} + e_{4} \not\equiv e_{2} + e_{6}, \: e_{1} + e_{5} \not\equiv e_{3} + e_{7} \pmod{2}$. 
\end{enumerate}
\end{rem}

Let $\alpha_{1} := f(\zeta_{16}) f(\zeta_{16}^{5}) f(\zeta_{16}^{9}) f(\zeta_{16}^{13})$, $\alpha_{2} := f(\zeta_{8}) f(\zeta_{8}^{5})$. 
By direct calculation, 
we have the following two lemmas. 

\begin{lem}\label{lem:2.3}
The following hold: 
\begin{align*}
\Re{(\alpha_{1})} 
&= e_{0}^{4} + e_{4}^{4} - e_{2}^{4} - e_{6}^{4} - 6 e_{0}^{2} e_{4}^{2} + 6 e_{2}^{2} e_{6}^{2} - 2 (e_{1}^{2} - e_{5}^{2}) (e_{3}^{2} - e_{7}^{2}) \\ 
&\quad + 4 (e_{2} e_{6} + e_{1} e_{7} + e_{3} e_{5}) (e_{0}^{2} - e_{4}^{2}) - 4 (e_{0} e_{6} + e_{2} e_{4} - e_{1} e_{5}) (e_{1}^{2} - e_{5}^{2}) \\ 
&\quad + 4 (e_{0} e_{4} + e_{1} e_{3} - e_{5} e_{7}) (e_{2}^{2} - e_{6}^{2}) - 4 (e_{0} e_{2} - e_{4} e_{6} + e_{3} e_{7} ) (e_{3}^{2} - e_{7}^{2}) \\ 
&\quad - 8 e_{0} e_{2} e_{1} e_{5} + 8 e_{0} e_{4} e_{1} e_{3} - 8 e_{0} e_{4} e_{5} e_{7} + 8 e_{0} e_{6} e_{3} e_{7} + 8 e_{2} e_{4} e_{3} e_{7} \\ 
&\quad - 8 e_{2} e_{6} e_{1} e_{7} - 8 e_{2} e_{6} e_{3} e_{5} + 8 e_{4} e_{6} e_{1} e_{5} + 8 e_{1} e_{3} e_{5} e_{7}, \\ 
\Im{(\alpha_{1})} 
&= e_{3}^{4} + e_{7}^{4} - e_{1}^{4} - e_{5}^{4} - 6 e_{3}^{2} e_{7}^{2} + 6 e_{1}^{2} e_{5}^{2} - 2 (e_{0}^{2} - e_{4}^{2}) (e_{2}^{2} - e_{6}^{2}) \\ 
&\quad + 4 (e_{0} e_{4} - e_{1} e_{3} + e_{5} e_{7}) (e_{0}^{2} - e_{4}^{2}) + 4 (e_{0} e_{2} - e_{4} e_{6} - e_{3} e_{7} ) (e_{1}^{2} - e_{5}^{2}) \\ 
&\quad - 4 (e_{2} e_{6} - e_{1} e_{7} - e_{3} e_{5}) (e_{2}^{2} - e_{6}^{2}) - 4 (e_{0} e_{6} + e_{2} e_{4} + e_{1} e_{5}) (e_{3}^{2} - e_{7}^{2}) \\ 
&\quad + 8 e_{0} e_{2} e_{4} e_{6} - 8 e_{0} e_{2} e_{3} e_{7} + 8 e_{0} e_{4} e_{1} e_{7} + 8 e_{0} e_{4} e_{3} e_{5} - 8 e_{0} e_{6} e_{1} e_{5} \\ 
&\quad - 8 e_{2} e_{4} e_{1} e_{5} + 8 e_{2} e_{6} e_{1} e_{3} - 8 e_{2} e_{6} e_{5} e_{7} + 8 e_{4} e_{6} e_{3} e_{7}, \\ 
\Re{(\alpha_{2})}  &= c_{0}^{2} - c_{2}^{2} + 2 c_{1} c_{3}, \\ 
\Im{(\alpha_{2})}  &= c_{3}^{2} - c_{1}^{2} + 2 c_{0} c_{2}. 
\end{align*}
\end{lem}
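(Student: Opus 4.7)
The formula for $\alpha_2$ follows quickly. Reducing $f(\zeta_8) = \sum_{k=0}^{15} a_k \zeta_8^k$ modulo $\zeta_8^8 = 1$ and $\zeta_8^4 = -1$ collapses the sum to $\sum_{j=0}^3 c_j \zeta_8^j$, and using $\zeta_8^2 = \sqrt{-1}$ this equals $(c_0 + \sqrt{-1}\,c_2) + \zeta_8 (c_1 + \sqrt{-1}\,c_3)$. Since $\zeta_8^5 = -\zeta_8$,
\begin{align*}
\alpha_2 = f(\zeta_8) f(-\zeta_8) = (c_0 + \sqrt{-1}\,c_2)^2 - \zeta_8^2 (c_1 + \sqrt{-1}\,c_3)^2 = (c_0 + \sqrt{-1}\,c_2)^2 - \sqrt{-1}\,(c_1 + \sqrt{-1}\,c_3)^2,
\end{align*}
and separating real and imaginary parts yields the stated identities.

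For $\alpha_1$, the idea is to work in the relative setting $\mathbb{Q}(\sqrt{-1}) \subset \mathbb{Q}(\zeta_{16})$, noting that $\{1,5,9,13\}$ is precisely the subgroup of $(\mathbb{Z}/16\mathbb{Z})^\times$ fixing $\sqrt{-1} = \zeta_{16}^4$, so $\alpha_1$ is the relative norm $N_{\mathbb{Q}(\zeta_{16})/\mathbb{Q}(\sqrt{-1})}(f(\zeta_{16}))$ and lies in $\mathbb{Z}[\sqrt{-1}]$. Writing $k = 4q+r$ with $0 \le r < 4$ and using $\zeta_{16}^4 = \sqrt{-1}$ gives
\begin{align*}
f(\zeta_{16}) = \sum_{r=0}^{3} \Bigl(\sum_{q=0}^{3} (\sqrt{-1})^q\, a_{4q+r}\Bigr) \zeta_{16}^r = \sum_{r=0}^{3} d_r \zeta_{16}^r.
\end{align*}
The generator $\sigma\colon \zeta_{16} \mapsto \zeta_{16}^5$ of $\mathrm{Gal}(\mathbb{Q}(\zeta_{16})/\mathbb{Q}(\sqrt{-1}))$ satisfies $\sigma^k(\zeta_{16}^r) = (\sqrt{-1})^{kr} \zeta_{16}^r$, since $5^k \equiv 1 + 4k \pmod{16}$. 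Hence, with $u_r := d_r \zeta_{16}^r$,
\begin{align*}
\alpha_1 = \prod_{k=0}^{3} \sum_{r=0}^{3} u_r (\sqrt{-1})^{kr},
\end{align*}
which is exactly the order-$4$ circulant determinant $D(u_0, u_1, u_2, u_3)$. Applying the factorization
\begin{align*}
D(u_0, u_1, u_2, u_3) = \bigl[(u_0 + u_2)^2 - (u_1 + u_3)^2\bigr]\bigl[(u_0 - u_2)^2 + (u_1 - u_3)^2\bigr]
\end{align*}
and substituting $u_r = d_r \zeta_{16}^r$ together with $\zeta_{16}^2 = \zeta_8$, $\zeta_8^2 = \sqrt{-1}$, a brief manipulation shows both bracketed factors take the form $P \pm \zeta_8 Q$ with
\begin{align*}
P = d_0^2 + \sqrt{-1}\,(d_2^2 - 2 d_1 d_3), \qquad Q = (2 d_0 d_2 - d_1^2) - \sqrt{-1}\,d_3^2,
\end{align*}
so that $\alpha_1 = P^2 - \sqrt{-1}\,Q^2$.

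Finally, substituting $d_r = e_r + \sqrt{-1}\,e_{r+4}$ produces explicit expressions for $\Re(P)$, $\Im(P)$, $\Re(Q)$, $\Im(Q)$ as quadratic forms in $e_0, \ldots, e_7$, and then
\begin{align*}
\Re(\alpha_1) = \Re(P)^2 - \Im(P)^2 + 2 \Re(Q) \Im(Q), \qquad \Im(\alpha_1) = 2 \Re(P) \Im(P) + \Im(Q)^2 - \Re(Q)^2
\end{align*}
expand to the asserted quartic polynomials. The main obstacle is purely clerical: the final step is a lengthy identity in eight variables with many cross terms, and matching signs with the stated formula (for instance, the $(e_0^2 - e_4^2)(e_2 e_6)$-type contributions involve a nontrivial cancellation between $\Re(P)^2$ and $2 \Re(Q) \Im(Q)$) is where errors are easy to make. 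In practice, I would verify this last expansion by computer algebra.
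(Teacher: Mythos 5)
Your proposal is correct, and it is genuinely more structured than what the paper offers: the paper proves Lemma~2.3 only by the phrase ``by direct calculation,'' i.e.\ a brute-force (presumably machine-assisted) expansion of the four-fold product, whereas you exploit the Galois structure. Your observation that $\{1,5,9,13\}$ is the decomposition subgroup fixing $\mathbb{Q}(\sqrt{-1})$, so that $\alpha_{1}$ is the relative norm of $f(\zeta_{16})=\sum_{r}d_{r}\zeta_{16}^{r}$ and hence an order-$4$ circulant in the twisted variables $u_{r}=d_{r}\zeta_{16}^{r}$, is sound (the key points check out: $\sigma$ fixes the coefficients $d_{r}\in\mathbb{Z}[\sqrt{-1}]$, and $5^{k}\equiv 1+4k \pmod{16}$), and I verified that the two circulant factors are indeed $P\pm\zeta_{8}Q$ with your $P=d_{0}^{2}+\sqrt{-1}\,(d_{2}^{2}-2d_{1}d_{3})$ and $Q=(2d_{0}d_{2}-d_{1}^{2})-\sqrt{-1}\,d_{3}^{2}$, so that $\alpha_{1}=P^{2}-\sqrt{-1}\,Q^{2}$; the same device for $\alpha_{2}$ gives $(c_{0}+\sqrt{-1}c_{2})^{2}-\sqrt{-1}(c_{1}+\sqrt{-1}c_{3})^{2}$, which matches the stated $\Re(\alpha_{2})$, $\Im(\alpha_{2})$ exactly. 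Your closed forms $\Re(\alpha_{1})=\Re(P)^{2}-\Im(P)^{2}+2\Re(Q)\Im(Q)$ and $\Im(\alpha_{1})=2\Re(P)\Im(P)+\Im(Q)^{2}-\Re(Q)^{2}$ are consistent with the lemma on every monomial and numerical instance I spot-checked (e.g.\ the coefficient $+8$ of $e_{1}e_{3}e_{5}e_{7}$ and the term $-2(e_{1}^{2}-e_{5}^{2})(e_{3}^{2}-e_{7}^{2})$ both emerge from cancellations among the three summands). What your route buys is that the only remaining expansion is the squaring of two quadratics in four variables rather than a product of four octic-root evaluations, and it explains \emph{a priori} why $\alpha_{1}\in\mathbb{Z}[\sqrt{-1}]$, a fact the paper extracts from the lemma after the fact; what it costs is that the final clerical expansion is still deferred, but that is no less rigorous than the paper's own one-line justification.
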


\begin{lem}\label{lem:2.2}
The following hold: 
\begin{enumerate}
\item[$(1)$] $N_{1} = \alpha_{1} \overline{\alpha_{1}}$; 
\item[$(2)$] $N_{2} = \alpha_{2} \overline{\alpha_{2}} = (c_{0}^{2} - c_{2}^{2} + 2 c_{1} c_{3})^{2} + (c_{3}^{2} - c_{1}^{2} + 2 c_{0} c_{2})^{2}$; 
\item[$(3)$] $N_{4} = f(\sqrt{-1}) f(- \sqrt{-1}) = (b_{0} - b_{2})^{2} + (b_{1} - b_{3})^{2}$; 
\item[$(4)$] $N_{8} N_{16} = f(1) f(- 1) = (b_{0} + b_{2})^{2} - (b_{1} + b_{3})^{2}$, 
\end{enumerate}
where $\overline{\alpha}$ denotes the complex conjugate of $\alpha$. 
\end{lem}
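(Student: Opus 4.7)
The plan is to exploit the two basic principles that (i) $f\in\mathbb{Z}[x]$ gives $\overline{f(\zeta_{16}^{l})}=f(\zeta_{16}^{-l})$, and (ii) $\zeta_{16}^{8}=-1$, $\zeta_{16}^{4}=\sqrt{-1}$, $\zeta_{16}^{2}=\zeta_{8}$. For (1) and the first equality of (2), the strategy is to note that the units modulo $16$ split under the involution $l\mapsto -l\pmod{16}$ into the two $4$-element sets $\{1,5,9,13\}$ and $\{3,7,11,15\}$, so that $\overline{\alpha_{1}}=f(\zeta_{16}^{3})f(\zeta_{16}^{7})f(\zeta_{16}^{11})f(\zeta_{16}^{15})$ and $\alpha_{1}\overline{\alpha_{1}}$ picks up exactly the eight factors defining $N_{1}$. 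The same argument applied to the units modulo $8$, which split as $\{1,5\}\sqcup\{3,7\}$, gives $N_{2}=\alpha_{2}\overline{\alpha_{2}}$. The explicit polynomial expression for $N_{2}$ in the $c_{k}$ then follows from $\alpha_{2}\overline{\alpha_{2}}=(\Re\alpha_{2})^{2}+(\Im\alpha_{2})^{2}$ combined with the formulas in Lemma~\ref{lem:2.3}.

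For (3) and (4), I would evaluate $f$ at the relevant low-order roots by collapsing its coefficients under the appropriate cyclotomic identification. Since $\sqrt{-1}$ has order $4$, grouping the exponents of $f$ modulo $4$ produces the collapsed coefficient $\sum_{j\equiv k\,(\mathrm{mod}\,4)}a_{j}=b_{k}$, so $f(\pm\sqrt{-1})=\sum_{k=0}^{3}b_{k}(\pm\sqrt{-1})^{k}=(b_{0}-b_{2})\pm\sqrt{-1}(b_{1}-b_{3})$, and multiplying the two values proves (3). Similarly $f(\pm 1)=(b_{0}+b_{2})\pm(b_{1}+b_{3})$, whose product gives (4).

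The main obstacle is bookkeeping rather than anything conceptual. For part (1) there is no explicit polynomial formula to verify here, only the identity $N_{1}=\alpha_{1}\overline{\alpha_{1}}$; the genuinely heavy calculation is the expansion of $\alpha_{1}$ as a polynomial in the $e_{k}$, but this expansion has already been isolated in Lemma~\ref{lem:2.3}. Once its formulas are in hand, Lemma~\ref{lem:2.2} reduces to the routine identifications above.
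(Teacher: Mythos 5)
Your proposal is correct and matches the paper's approach: the paper offers no written argument beyond ``by direct calculation,'' and your outline is exactly the natural verification --- pairing the factors of $N_{1}$ and $N_{2}$ via $\overline{f(\zeta_{16}^{l})}=f(\zeta_{16}^{-l})$, and collapsing the coefficients of $f$ modulo $4$ (resp.\ $2$) to evaluate $f(\pm\sqrt{-1})$ and $f(\pm 1)$ in terms of the $b_{k}$.
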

Lemma~$\ref{lem:2.3}$ implies $\alpha_{1}, \alpha_{2} \in \mathbb{Z}[\sqrt{- 1}]$. 
Thus, from Lemma~$\ref{lem:2.2}$~$(1)$--$(3)$, 
we find that $N_{1}$, $N_{2}$ and $N_{4}$ are expressible in the form $a^{2} + b^{2}$. 
It is well-known that a positive integer $n$ is expressible as a sum of two squares if and only if in the prime factorization of $n$, 
every prime of the form $4 m + 3$ occurs an even number of times. 
From this fact, we have the following lemma. 

\begin{lem}\label{lem:2.5}
For any prime $p$ of the form $4 m + 3$ and $d \in \{ 1, 2, 4 \}$, 
it holds that if $p^{2 k - 1} \mid N_{d}$ with $k \geq 1$, 
then $p^{2 k} \mid N_{d}$. 
\end{lem}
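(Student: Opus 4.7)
The plan is to reduce the statement directly to the classical theorem on sums of two squares, which the excerpt has just invoked. By Lemma~\ref{lem:2.2}~$(1)$--$(3)$ combined with Lemma~\ref{lem:2.3}, for each $d \in \{1,2,4\}$ we have an explicit identity $N_{d} = A_{d}^{2} + B_{d}^{2}$ with $A_{d}, B_{d} \in \mathbb{Z}$: namely $A_{1} = \Re(\alpha_{1})$, $B_{1} = \Im(\alpha_{1})$ for $d = 1$, the analogous expressions for $d = 2$, and $A_{4} = b_{0} - b_{2}$, $B_{4} = b_{1} - b_{3}$ for $d = 4$. In particular each $N_{d}$ is a nonnegative integer.

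First I would dispose of the trivial case $N_{d} = 0$, where $p^{2k} \mid 0$ holds automatically. Assuming $N_{d} > 0$, I would apply the classical two-squares theorem recalled just before the lemma: since $N_{d}$ is representable as a sum of two integer squares, every prime $q \equiv 3 \pmod{4}$ appears in its prime factorization with even multiplicity. Hence the $p$-adic valuation $v_{p}(N_{d})$ is an even nonnegative integer. Writing the hypothesis $p^{2k-1} \mid N_{d}$ as $v_{p}(N_{d}) \geq 2k - 1$, parity forces $v_{p}(N_{d}) \geq 2k$, which is exactly $p^{2k} \mid N_{d}$.

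There is essentially no obstacle here; the only point to be careful about is that the representations in Lemma~\ref{lem:2.3} really do have integer (not merely rational or Gaussian) real and imaginary parts, so that the two-squares theorem applies as stated. This is immediate from the polynomial formulas, since the $e_{i}$, $c_{i}$, $b_{i}$ are themselves integer combinations of the $a_{i}$.
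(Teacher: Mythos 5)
Your proof is correct and follows exactly the paper's own route: the authors likewise deduce the lemma from the fact that $N_{1}$, $N_{2}$, $N_{4}$ are sums of two integer squares (via Lemmas~\ref{lem:2.2} and \ref{lem:2.3}) together with the classical two-squares theorem, so every prime $\equiv 3 \pmod 4$ divides $N_{d}$ to an even power. Your explicit handling of the case $N_{d}=0$ is a small point the paper leaves implicit, but the argument is the same.
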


\section{Conditions for integer norms}

In this section, 
we prove the following theorem used to determine impossible values in the next section. 

\begin{thm}\label{thm:3.1}
The following three conditions are equivalent: 
\begin{enumerate}
\item[$(1)$] $D(a_{0}, a_{1}, \ldots, a_{15}) = \prod_{d \mid 16} N_{d} \in 64 \mathbb{Z}_{{\rm odd}}$; 
\item[$(2)$] $N_{1}, N_{2} \in 2 \mathbb{Z}_{{\rm odd}}$ and $N_{4} N_{8} N_{16} \in 16 \mathbb{Z}_{{\rm odd}}$; 
\item[$(3)$] $N_{1}, N_{2}, N_{4} \in 2 \mathbb{Z}_{{\rm odd}}$ and $N_{8} N_{16} \in 8 \mathbb{Z}_{{\rm odd}}$. 
\end{enumerate}
\end{thm}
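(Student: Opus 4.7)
The strategy is to read off the $2$-adic valuation of each factor $N_1, N_2, N_4, N_8 N_{16}$ from a single pair of parities and then run a short case analysis. Set
\[
P := b_0+b_2 \bmod 2, \qquad Q := b_1+b_3 \bmod 2.
\]
From the definitions $b_k \equiv c_k \equiv e_k + e_{k+4} \pmod 2$, so $P \equiv c_0+c_2 \equiv e_0+e_2+e_4+e_6$ and $Q \equiv c_1+c_3 \equiv e_1+e_3+e_5+e_7 \pmod 2$; these are exactly the congruences that underlie Remark~\ref{rem:2.1}.

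The first step is to reduce the formulas of Lemma~\ref{lem:2.3} modulo $2$. In both $\Re(\alpha_1)$ and $\Im(\alpha_1)$ every coefficient outside the four quartic monomials $e_j^4$ is even (one of $\pm 2, \pm 4, \pm 6, \pm 8$), so using $x^4 \equiv x \pmod 2$ I obtain $\Re(\alpha_1) \equiv P$ and $\Im(\alpha_1) \equiv Q$, and likewise $\Re(\alpha_2) \equiv P$ and $\Im(\alpha_2) \equiv Q \pmod 2$. By Lemma~\ref{lem:2.2}, each of $N_1, N_2, N_4$ is then a sum of two squares $X^2+Y^2$ with $X \equiv P$ and $Y \equiv Q \pmod 2$, while $N_8 N_{16} = (b_0+b_2)^2 - (b_1+b_3)^2$ with $b_0+b_2 \equiv P$ and $b_1+b_3 \equiv Q \pmod 2$.

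The case analysis on $(P,Q)$ then writes itself. If $(P,Q)=(1,1)$, each of $N_1, N_2, N_4$ is a sum of two odd squares, hence $\equiv 2 \pmod 8$ and has $v_2 = 1$; moreover $N_8 N_{16}$ is a difference of two odd squares, hence divisible by $8$, so $v_2(N_8 N_{16}) \ge 3$. Thus $v_2(D) \ge 6$ with equality iff $v_2(N_8 N_{16}) = 3$. If $P+Q$ is odd, each factor is a sum or difference of one odd and one even square, hence odd, so $v_2(D)=0$. If $(P,Q)=(0,0)$, each factor is a sum or difference of two even squares and so divisible by $4$, forcing $v_2(D)\ge 8$. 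Consequently condition~$(1)$, $v_2(D)=6$, is equivalent to $(P,Q)=(1,1)$ together with $v_2(N_8 N_{16})=3$, which is condition~$(3)$; the equivalence $(2)\Leftrightarrow (3)$ then follows since in the surviving case $v_2(N_4)=1$, so $v_2(N_4 N_8 N_{16})=4$ iff $v_2(N_8 N_{16})=3$.

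The only genuine piece of work is the parity reduction of $\Re(\alpha_1), \Im(\alpha_1)$ from Lemma~\ref{lem:2.3}, which amounts to confirming by inspection that, apart from the four leading $e_j^4$ terms, every coefficient in the given expressions is even. Once that is in hand, the theorem drops out of the three-way case split.
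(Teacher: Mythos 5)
Your proof is correct, and it reaches the theorem by a genuinely more self-contained route than the paper. The paper splits the statement into three lemmas: Lemma~\ref{lem:3.2} proves $N_{1}\equiv N_{2}\equiv N_{4}\equiv N_{8}\equiv N_{16}\pmod{2}$ via the congruence $f(\zeta_{16}^{k})\equiv f(1)\pmod{1-\zeta_{16}}$ and the fact that $(1-\zeta_{16})\mathbb{Z}[\zeta_{16}]\cap\mathbb{Z}=2\mathbb{Z}$; Lemma~\ref{lem:3.3} imports Kaiblinger's result $S(4)=\mathbb{Z}_{\rm odd}\cup 16\mathbb{Z}$ to get $N_{4}N_{8}N_{16}\in\mathbb{Z}_{\rm odd}\cup 16\mathbb{Z}$; and Lemma~\ref{lem:3.4} handles the $N_{2}$ versus $N_{4}$ comparison exactly as you do, via the parities of $c_{0}+c_{2}$, $c_{1}+c_{3}$, $b_{0}+b_{2}$, $b_{1}+b_{3}$ and Remark~\ref{rem:2.1}. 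You replace the first two lemmas by direct parity bookkeeping: reducing Lemma~\ref{lem:2.3} modulo $2$ shows $\Re(\alpha_{1})\equiv\Re(\alpha_{2})\equiv P$ and $\Im(\alpha_{1})\equiv\Im(\alpha_{2})\equiv Q$ (I checked that every non-quartic coefficient in $\Re(\alpha_{1})$ and $\Im(\alpha_{1})$ is indeed even), and the divisibility $v_{2}(N_{8}N_{16})\geq 3$ in the case $(P,Q)=(1,1)$ follows from elementary odd-square arithmetic rather than from $S(4)$. What your approach buys is independence from Kaiblinger's theorem and from the cyclotomic ideal argument, at the cost of leaning on the explicit (computer-verified) formulas of Lemma~\ref{lem:2.3}, which the paper's Lemma~\ref{lem:3.2} avoids entirely; it also yields slightly sharper information ($N_{1},N_{2},N_{4}\equiv 2\pmod 8$ in the relevant case, and $v_{2}(D)\geq 8$ when $(P,Q)=(0,0)$) that the theorem does not require. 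Both arguments share the same skeleton --- a $2$-adic valuation count over the factors $N_{1},N_{2},N_{4},N_{8}N_{16}$ --- so the difference is in the supporting lemmas, not the overall strategy.
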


Theorem~$\ref{thm:3.1}$ immediately follows from the following three lemmas. 

\begin{lem}\label{lem:3.2}
We have $N_{1} \equiv N_{2} \equiv N_{4} \equiv N_{8} \equiv N_{16} \pmod{2}$. 
\end{lem}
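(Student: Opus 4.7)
The plan is to show that every $N_{d}$ is congruent modulo $2$ to the single invariant $T := \sum_{k = 0}^{15} a_{k}$. Two of the five norms are immediate from the definition $N_{d} = \prod_{\gcd(l, 16) = d} f(\zeta_{16}^{l})$: one has $N_{16} = f(1) = T$, and $N_{8} = f(- 1) \equiv T \pmod{2}$ since $(- 1)^{k} \equiv 1 \pmod{2}$.

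For $N_{4}$ and $N_{2}$ I would feed the explicit formulas in Lemma~$\ref{lem:2.2}\,(2),(3)$ into the elementary congruence $x^{2} \equiv x \pmod{2}$. Applied to $N_{4} = (b_{0} - b_{2})^{2} + (b_{1} - b_{3})^{2}$, this at once yields $N_{4} \equiv b_{0} + b_{1} + b_{2} + b_{3} \pmod{2}$, and the definition of the $b_{k}$ shows that this sum equals $T$. In $N_{2}$ the two summands $2 c_{1} c_{3}$ and $2 c_{0} c_{2}$ vanish mod $2$, after which the same squaring trick reduces $N_{2}$ to $c_{0} + c_{1} + c_{2} + c_{3} \pmod{2}$; the alternating signs defining $c_{k}$ are invisible mod~$2$, and this sum again collapses to $T$.

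The one step demanding a closer look is $N_{1}$. Using Lemma~$\ref{lem:2.2}\,(1)$, write $N_{1} = \Re(\alpha_{1})^{2} + \Im(\alpha_{1})^{2} \equiv \Re(\alpha_{1}) + \Im(\alpha_{1}) \pmod{2}$, and then scan the formulas of Lemma~$\ref{lem:2.3}$: every coefficient appearing there is even except for those on the pure quartics $\pm e_{i}^{4}$. Modulo~$2$ this leaves only $e_{0}^{4} + e_{2}^{4} + e_{4}^{4} + e_{6}^{4}$ from the real part and $e_{1}^{4} + e_{3}^{4} + e_{5}^{4} + e_{7}^{4}$ from the imaginary part, so $N_{1} \equiv \sum_{k = 0}^{7} e_{k}^{4} \equiv \sum_{k = 0}^{7} e_{k} \equiv \sum_{k = 0}^{7}(a_{k} - a_{k + 8}) \equiv T \pmod{2}$.

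The main (essentially only) obstacle is the bookkeeping for $N_{1}$: one must verify by direct inspection of Lemma~$\ref{lem:2.3}$ that no monomial other than the quartics $\pm e_{i}^{4}$ carries an odd coefficient. Since the tabulated coefficients are all drawn from $\{\pm 1, \pm 2, \pm 4, \pm 6, \pm 8\}$, this inspection is mechanical, and once it is done the lemma follows without any additional ideas.
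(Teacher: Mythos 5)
Your argument is correct, but it proceeds quite differently from the paper. The paper's proof is a one-line local argument at the ramified prime above $2$: since $\zeta_{16} \equiv 1 \pmod{1 - \zeta_{16}}$, every $f(\zeta_{16}^{l})$ is congruent to $f(1)$ modulo $1 - \zeta_{16}$, and $m^{2} \equiv m \pmod{1 - \zeta_{16}}$ for $m \in \mathbb{Z}$ then forces all the $N_{d}$ (products of varying numbers of such factors) to agree modulo $1 - \zeta_{16}$, hence modulo $2$ because $(1 - \zeta_{16}) \mathbb{Z}[\zeta_{16}] \cap \mathbb{Z} = 2 \mathbb{Z}$. You instead verify directly from the explicit formulas of Lemmas~\ref{lem:2.2} and \ref{lem:2.3} that each $N_{d}$ reduces to $T = \sum_{k} a_{k}$ modulo $2$, using $x^{2} \equiv x \pmod{2}$; I checked each of your five reductions (including the parity scan of the coefficients in Lemma~\ref{lem:2.3}, which are indeed all even except those on the quartics $\pm e_{i}^{4}$, and the use of $N_{1} = \Re(\alpha_{1})^{2} + \Im(\alpha_{1})^{2}$, which is justified since $\alpha_{1} \in \mathbb{Z}[\sqrt{-1}]$), and they all hold. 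The trade-off: your route is elementary and self-contained given the stated formulas, and it pins down the common residue explicitly as $f(1) \bmod 2$; the paper's route avoids the explicit expansions entirely, is immune to any transcription error in Lemma~\ref{lem:2.3}, and generalizes verbatim to circulants of any prime-power order, which is why the authors prefer it.
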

\begin{proof}
Since $f(\zeta_{16}^{k}) \equiv f(1) \pmod{1 - \zeta_{16}}$ holds for any $k \in \mathbb{Z}$ and $m^{2} \equiv m \pmod{1 - \zeta_{16}}$ holds for any $m \in \mathbb{Z}$, 
we have 
$$
N_{1} \equiv N_{2} \equiv N_{4} \equiv N_{8} \equiv N_{16} \pmod{1 - \zeta_{16}}. 
$$
Therefore, 
from $(1 - \zeta_{16}) \mathbb{Z}[\zeta_{16}] \cap \mathbb{Z} = 2 \mathbb{Z}$, 
the lemma is proved. 
\end{proof}

\begin{lem}\label{lem:3.3}
We have 
$N_{4} N_{8} N_{16} \in \mathbb{Z}_{{\rm odd}} \cup 16 \mathbb{Z}$. 
\end{lem}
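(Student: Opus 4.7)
The plan is to expand $N_{4} N_{8} N_{16}$ directly via Lemma~\ref{lem:2.2}~$(3)$--$(4)$, reducing the statement to an elementary modular arithmetic check in the integers $b_{0}, b_{1}, b_{2}, b_{3}$. Writing
$$
N_{4} N_{8} N_{16} = \bigl((b_{0} - b_{2})^{2} + (b_{1} - b_{3})^{2}\bigr)\bigl((b_{0} + b_{2})^{2} - (b_{1} + b_{3})^{2}\bigr),
$$
the crucial observation is the parity synchronization $b_{0} - b_{2} \equiv b_{0} + b_{2} \pmod{2}$ and $b_{1} - b_{3} \equiv b_{1} + b_{3} \pmod{2}$, which couples the $2$-adic behavior of the two factors.

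I would then split into three cases according to the parities of $b_{0} - b_{2}$ and $b_{1} - b_{3}$. If the two differences have opposite parities, then the first factor is odd, and since $b_{0} + b_{2}$ and $b_{1} + b_{3}$ also have opposite parities, the second factor is odd as well, so $N_{4} N_{8} N_{16}$ is odd. If both differences are odd, then the first factor is $\equiv 2 \pmod{4}$, while $(b_{0} + b_{2})^{2} \equiv (b_{1} + b_{3})^{2} \equiv 1 \pmod{8}$ forces the second factor to be divisible by $8$; hence the product is divisible by $16$. If both differences are even, then both factors are divisible by $4$, and the product is again divisible by $16$.

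There is no serious obstacle here: every step is routine once Lemma~\ref{lem:2.2} is invoked, and no deep structural input is needed. The only point of substance is recognizing the parity coupling between the sums and differences $b_{i} \pm b_{i+2}$, which is precisely what forbids the $2$-adic valuation of $N_{4} N_{8} N_{16}$ from landing in $\{1, 2, 3\}$ and thereby yields the dichotomy claimed.
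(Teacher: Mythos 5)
Your argument is correct, but it takes a genuinely different route from the paper. The paper observes that $N_{4} N_{8} N_{16} = \prod_{l = 0}^{3} (b_{0} + b_{1} \zeta_{4}^{l} + b_{2} \zeta_{4}^{2 l} + b_{3} \zeta_{4}^{3 l})$ is itself an integer circulant determinant of order $4$ in the variables $b_{0}, \ldots, b_{3}$, and then simply invokes Kaiblinger's result $S(4) = \mathbb{Z}_{{\rm odd}} \cup 16 \mathbb{Z}$ quoted in the introduction. You instead prove the needed containment from scratch: writing the product as $\bigl((b_{0} - b_{2})^{2} + (b_{1} - b_{3})^{2}\bigr)\bigl((b_{0} + b_{2})^{2} - (b_{1} + b_{3})^{2}\bigr)$ and exploiting the parity coupling $b_{0} - b_{2} \equiv b_{0} + b_{2}$, $b_{1} - b_{3} \equiv b_{1} + b_{3} \pmod{2}$, your three cases check out: mixed parities give an odd value; both odd gives a factor $\equiv 2 \pmod{4}$ times a difference of two odd squares, which is divisible by $8$; both even gives two factors each divisible by $4$. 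In effect you have reproved the inclusion $S(4) \subset \mathbb{Z}_{{\rm odd}} \cup 16 \mathbb{Z}$ in the special form furnished by Lemma~\ref{lem:2.2}. What your approach buys is self-containment --- the lemma no longer depends on an external theorem --- at the cost of a short case analysis; the paper's approach buys brevity and makes transparent why the order-$4$ result governs this factor of the order-$16$ determinant.
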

\begin{proof}
Using the result for $S(4)$ obtained by Kaiblinger (see the introduction), 
we have 
$$
N_{4} N_{8} N_{16} = \prod_{l = 0}^{3} f(\zeta_{4}^{l}) = \prod_{l = 0}^{3} (b_{0} + b_{1} \zeta_{4}^{l} + b_{2} \zeta_{4}^{2 l} + b_{3} \zeta_{4}^{3 l} ) \in S(4) = \mathbb{Z}_{{\rm odd}} \cup 16 \mathbb{Z}. 
$$
\end{proof}

\begin{lem}\label{lem:3.4}
The following holds: $N_{2} \in 2 \mathbb{Z}_{{\rm odd}} \: \:  \text{if and only if} \: \: N_{4} \in 2 \mathbb{Z}_{{\rm odd}}$. 
\end{lem}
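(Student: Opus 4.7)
The plan is to translate both sides of the equivalence into parity conditions on the $b_k$'s and the $c_k$'s, and then invoke Remark~\ref{rem:2.1}. From Lemma~\ref{lem:2.2}(3), $N_{4} = (b_{0} - b_{2})^{2} + (b_{1} - b_{3})^{2}$. Since any integer square is $0$ or $1$ modulo $4$, this sum is congruent to $2$ modulo $4$ -- equivalently $N_{4} \in 2\mathbb{Z}_{{\rm odd}}$ -- if and only if both $b_{0} - b_{2}$ and $b_{1} - b_{3}$ are odd, which is precisely condition (1) of Remark~\ref{rem:2.1}.

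For $N_{2}$ I would use Lemma~\ref{lem:2.2}(2): writing $A := c_{0}^{2} - c_{2}^{2} + 2 c_{1} c_{3}$ and $B := c_{3}^{2} - c_{1}^{2} + 2 c_{0} c_{2}$, analyse $N_{2} = A^{2} + B^{2}$ modulo $4$ via the parities of $c_{0} - c_{2}$ and $c_{1} - c_{3}$. Factoring $c_{0}^{2} - c_{2}^{2} = (c_{0} - c_{2})(c_{0} + c_{2})$, the parity of $A$ equals that of $c_{0} + c_{2}$; moreover, if $c_{0} \equiv c_{2} \pmod{2}$ then both $c_{0} \pm c_{2}$ are even, so $(c_{0} - c_{2})(c_{0} + c_{2}) \equiv 0 \pmod{4}$ and hence $A \equiv 2 c_{1} c_{3} \pmod{4}$, giving $A^{2} \equiv 0 \pmod{4}$. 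Symmetric statements hold for $B$. A three-case analysis then yields: $N_{2}$ is odd when exactly one of $c_{0} \equiv c_{2}$, $c_{1} \equiv c_{3}$ holds; $N_{2} \equiv 0 \pmod{4}$ when both hold; and $N_{2} \equiv 2 \pmod{4}$ when neither holds.

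Therefore $N_{2} \in 2\mathbb{Z}_{{\rm odd}}$ is equivalent to condition (2) of Remark~\ref{rem:2.1}. Because conditions (1) and (2) of Remark~\ref{rem:2.1} are equivalent, combining the two characterisations gives $N_{2} \in 2\mathbb{Z}_{{\rm odd}}$ iff $N_{4} \in 2\mathbb{Z}_{{\rm odd}}$, as required. The only mild obstacle is that the implication $N_{2} \equiv 2 \pmod{4}$ cannot be decided working only modulo $2$: one must note that when both $A$ and $B$ are even the cross terms $(c_{0} - c_{2})(c_{0} + c_{2})$ and $(c_{3} - c_{1})(c_{3} + c_{1})$ are automatically divisible by $4$, so $N_{2}$ is then divisible by $4$ rather than just by $2$. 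Once this is observed the proof closes immediately.
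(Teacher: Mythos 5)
Your proof is correct and takes essentially the same route as the paper: both arguments reduce $N_{2} \in 2\mathbb{Z}_{{\rm odd}}$ and $N_{4} \in 2\mathbb{Z}_{{\rm odd}}$, via Lemma~\ref{lem:2.2}~$(2)$ and $(3)$, to the parity conditions $c_{0} \not\equiv c_{2}$, $c_{1} \not\equiv c_{3} \pmod{2}$ and $b_{0} \not\equiv b_{2}$, $b_{1} \not\equiv b_{3} \pmod{2}$ respectively, and then invoke Remark~\ref{rem:2.1}. Your three-case analysis for $N_{2}$ is a little more elaborate than necessary (a sum of two squares is $\equiv 2 \pmod{4}$ exactly when both are odd, since $A$ even already forces $A^{2} \equiv 0 \pmod{4}$ with no need to examine cross terms), but it is sound.
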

\begin{proof}
From Lemma~$\ref{lem:2.2}$~$(2)$ and $(3)$, 
we have 
\begin{align*}
N_{2} \in 2 \mathbb{Z}_{{\rm odd}} 
\iff c_{0}^{2} - c_{2}^{2} \equiv c_{3}^{2} - c_{1}^{2} \equiv 1 \pmod{2} 
\iff c_{0} \not\equiv c_{2}, \: c_{1} \not\equiv c_{3} \pmod{2}, \\ 
N_{4} \in 2 \mathbb{Z}_{{\rm odd}} \iff b_{0} - b_{2} \equiv b_{1} - b_{3} \equiv 1 \pmod{2} \iff b_{0} \not\equiv b_{2}, \: b_{1} \not\equiv b_{3} \pmod{2}, 
\end{align*}
respectively. 
Therefore, from Remark~$\ref{rem:2.1}$, 
the lemma is proved. 
\end{proof}

\section{Impossible values}\label{Section4}

In this section, 
we present impossible values. 
We will see, in the last section, that $S(16)$ includes every integer that is not mentioned in the following theorem. 

\begin{thm}\label{thm:4.1}
Let $p_{i} = a_{i}^{2} + b_{i}^{2} \equiv 1 \pmod{8}$ be a prime 
with $a_{i} \pm b_{i} \in \{8 m \pm 1 \mid m \in \mathbb{Z} \}$ for each $1 \leq i \leq r$,  
let $p_{r + 1}, \ldots, p_{r + s} \equiv 7 \pmod{8}$ be primes, 
let $q_{1}, \ldots, q_{t} \equiv 3 \pmod{8}$ be distinct primes, 
and let $k_{1}, \ldots, k_{r + s}$ be non-negative integers. 
Then 
\begin{align*}
64 p_{1}^{k_{1}} \cdots p_{r}^{k_{r}} p_{r + 1}^{k_{r + 1}} \cdots p_{r + s}^{k_{r + s}}, \quad 
64 p_{1}^{k_{1}} \cdots p_{r}^{k_{r}} p_{r + 1}^{k_{r + 1}} \cdots p_{r + s}^{k_{r + s}} q_{1} \cdots q_{t} 
\notin S (16). 
\end{align*}
\end{thm}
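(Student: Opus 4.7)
The plan is to argue by contradiction. Assume $D(a_0,\ldots,a_{15}) = 64M$ with $M$ one of the listed odd integers. Since $D \in 64\mathbb{Z}_{\rm odd}$, Theorem~\ref{thm:3.1}(3) yields $N_1, N_2, N_4 \in 2\mathbb{Z}_{\rm odd}$ and $N_8 N_{16} \in 8\mathbb{Z}_{\rm odd}$, so $M = (N_1/2)(N_2/2)(N_4/2)(N_8 N_{16}/8)$. By Lemma~\ref{lem:2.5}, every prime $\equiv 3 \pmod 4$ divides each of $N_1, N_2, N_4$ to an even exponent, whence any prime $q \equiv 3 \pmod 4$ dividing $M$ to odd order must divide $N_8 N_{16}$ to the same odd order.

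The crucial step is to extract finer residue data from Lemma~\ref{lem:2.3}. Because $N_j = \alpha_j \overline{\alpha_j} \in 2\mathbb{Z}_{\rm odd}$ for $j \in \{1,2\}$, each $\alpha_j$ lies in $(1+\sqrt{-1})\mathbb{Z}[\sqrt{-1}] \setminus (1+\sqrt{-1})^2 \mathbb{Z}[\sqrt{-1}]$. I would determine the admissible residues of $\alpha_j$ modulo $(1+\sqrt{-1})^r$ for $r = 4$ or $5$ (equivalently modulo $4$ or $4(1+\sqrt{-1})$) directly from the explicit polynomial expressions, using Remark~\ref{rem:2.1} to couple the parities of the $b_k, c_k, e_k$. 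The output is a short list of admissible residue classes, and hence a mod-$8$ congruence restriction on every rational prime that can divide $N_j$ to odd multiplicity.

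Case analysis then disposes of each shape of $M$. For a prime $p \equiv 7 \pmod 8$ appearing in $M$ to odd power, the first paragraph forces the entire odd-multiplicity contribution of $p$ into $N_8 N_{16} = f(1) f(-1) = (b_0+b_1+b_2+b_3)(b_0-b_1+b_2-b_3)$; the $2$-adic distribution dictated by $N_8 N_{16} \in 8\mathbb{Z}_{\rm odd}$ together with the parity data inherited from the second paragraph should be shown mod $8$ to be incompatible with a residue of $7$ in the odd part. For a prime $p_i = a_i^2+b_i^2 \equiv 1 \pmod 8$ with $a_i+b_i \equiv \pm 1 \pmod 8$, the Gaussian generator $a_i + b_i \sqrt{-1}$ has a residue modulo $(1+\sqrt{-1})^r$ excluded by the admissible list, so $p_i$ can divide $N_1$ and $N_2$ only to even power; this drives the odd-multiplicity contribution of $p_i$ into $N_8 N_{16}$, where the same mod-$8$ obstruction reappears. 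The mixed case $64 \prod p_i^{k_i} \prod q_j$ with distinct $q_j \equiv 3 \pmod 8$ is handled by applying the mod-$8$ obstruction on $N_8 N_{16}$ to each $q_j$ individually.

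The principal obstacle is the mod-$(1+\sqrt{-1})^r$ analysis: the polynomials $\Re(\alpha_1), \Im(\alpha_1)$ in Lemma~\ref{lem:2.3} each carry roughly thirty degree-four monomials in $e_0, \ldots, e_7$, so tracking joint residues across all admissible parity patterns is heavy. I would exploit the cyclic symmetry among the $e_k$ (coming from cyclic shift of the $a_j$, which permutes each $N_d$ up to sign) and the swap $e_k \leftrightarrow e_{k+4}$ to compress the case split, and tabulate the admissible residues of $\alpha_1 / (1+\sqrt{-1})$ modulo $4$ in a compact table before running the main argument.
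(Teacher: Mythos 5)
Your opening is the same as the paper's: assume $D=64M$, invoke Theorem~\ref{thm:3.1} and Lemma~\ref{lem:2.5} to pin down the shape of $N_{1},N_{2},N_{4},N_{8}N_{16}$, and then seek finer $2$-adic information about $\alpha_{1},\alpha_{2}$. But the mechanism you propose for the contradiction has two genuine flaws. First, a residue class for $\alpha_{j}$ modulo a power of $1+\sqrt{-1}$ does \emph{not} give ``a mod-$8$ congruence restriction on every rational prime that can divide $N_{j}$ to odd multiplicity,'' and your later claim that a prime $p_{i}=a_{i}^{2}+b_{i}^{2}$ with $a_{i}\pm b_{i}\equiv\pm1\pmod 8$ ``can divide $N_{1}$ and $N_{2}$ only to even power'' is backwards: the set of Gaussian integers with real and imaginary parts $\equiv\pm1\pmod 8$ is \emph{closed} under multiplication by such $a_{i}+b_{i}\sqrt{-1}$ (this is exactly the paper's Lemma~\ref{lem:4.8}), so these primes may divide $N_{1}$ to any power whatsoever; their role is that they cannot move $\alpha_{1},\alpha_{2}$ out of that residue class, not that they are excluded. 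Second, the contradiction you aim for --- a mod-$8$ obstruction inside the odd part of $N_{8}N_{16}$ alone --- does not exist: with $b_{0}+b_{2}=9$, $b_{1}+b_{3}=5$ one gets $N_{8}N_{16}=81-25=56=8\cdot7$, so an odd part $\equiv 7\pmod 8$ is perfectly attainable. Worse, your case analysis has nothing to act on when $M=1$, yet the theorem asserts $64\notin S(16)$, so any correct proof must produce a contradiction even when no odd prime divides $M$.

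The missing idea is that the obstruction is a \emph{joint} congruence between $N_{4}$ and $N_{8}N_{16}$ modulo $16$, mediated by $b_{0}b_{2}+b_{1}b_{3}$. Concretely, the paper shows $N_{4}\equiv N_{8}N_{16}-4(b_{0}b_{2}+b_{1}b_{3})+2\pmod{16}$ (Lemma~\ref{lem:4.2}); the congruences for $\Re(\alpha_{2}),\Im(\alpha_{2})$ and $\Re(\alpha_{1}),\Im(\alpha_{1})$ modulo $8$ (Lemmas~\ref{lem:4.3} and \ref{lem:4.6}) show that if all four lie in $\{8m\pm1\}$ then $c_{0}c_{2}+c_{1}c_{3}\equiv0$ and $b_{0}b_{2}+b_{1}b_{3}+c_{0}c_{2}+c_{1}c_{3}\equiv0\pmod 4$, hence $b_{0}b_{2}+b_{1}b_{3}\equiv0\pmod4$ and $N_{4}\equiv N_{8}N_{16}+2\pmod{16}$; meanwhile the assumed factorization forces $N_{4}\equiv2$ and $N_{8}N_{16}\equiv8\pmod{16}$ (the odd parts of $N_{4}$ being $\equiv1\pmod 8$ since each $p_{i}\equiv1$ or $p_{i}^{2}\equiv1\pmod 8$). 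That is the contradiction, and it covers the case $M=1$ automatically. Your proposal correctly identifies the ingredients (the explicit formulas of Lemma~\ref{lem:2.3}, parity coupling via Remark~\ref{rem:2.1}, and multiplicativity over Gaussian primes) but does not assemble them into a workable obstruction, so as it stands the argument would not close.
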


To prove Theorem~$\ref{thm:4.1}$, 
we use the following six lemmas.

\begin{lem}\label{lem:4.2}
If $b_{0} \not\equiv b_{2}, \: b_{1} \not\equiv b_{3} \pmod{2}$, 
then $N_{4} \equiv N_{8} N_{16} - 4 (b_{0} b_{2} + b_{1} b_{3}) + 2 \pmod{16}$. 
\end{lem}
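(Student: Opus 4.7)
The plan is a short, direct calculation from the explicit expressions in Lemma~\ref{lem:2.2}. First I would write
\[
N_{4} - N_{8} N_{16} = (b_{0} - b_{2})^{2} + (b_{1} - b_{3})^{2} - (b_{0} + b_{2})^{2} + (b_{1} + b_{3})^{2}
\]
using parts $(3)$ and $(4)$ of Lemma~\ref{lem:2.2}. Expanding the two pairs of squares separately gives $(b_{0} - b_{2})^{2} - (b_{0} + b_{2})^{2} = -4 b_{0} b_{2}$ and $(b_{1} - b_{3})^{2} + (b_{1} + b_{3})^{2} = 2(b_{1}^{2} + b_{3}^{2})$, so
\[
N_{4} - N_{8} N_{16} = -4 b_{0} b_{2} + 2(b_{1}^{2} + b_{3}^{2}).
\]

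Comparing with the target congruence, the claim reduces to showing $2(b_{1}^{2} + b_{3}^{2}) \equiv -4 b_{1} b_{3} + 2 \pmod{16}$, equivalently $(b_{1} + b_{3})^{2} \equiv 1 \pmod{8}$. This is precisely where the hypothesis is used: from $b_{1} \not\equiv b_{3} \pmod{2}$ the integer $b_{1} + b_{3}$ is odd, and every odd integer squares to $1$ modulo $8$. Substituting back and collecting terms yields the desired congruence.

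There is no real obstacle here; the whole content is the identity for $N_4 - N_8 N_{16}$ and the elementary fact about odd squares mod $8$. The only point to be careful about is keeping track of the moduli: the identity for $(b_1+b_3)^2$ holds mod $8$, but multiplying by $2$ upgrades it to mod $16$, which is exactly what is needed to match the stated conclusion.
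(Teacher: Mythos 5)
Your proposal is correct and follows essentially the same route as the paper: both start from the expressions for $N_{4}$ and $N_{8}N_{16}$ in Lemma~\ref{lem:2.2}~$(3)$,~$(4)$, reduce to $N_{4}-N_{8}N_{16}=-4b_{0}b_{2}+2(b_{1}^{2}+b_{3}^{2})$, rewrite via $2(b_{1}+b_{3})^{2}$, and finish with the fact that an odd square is $1$ modulo $8$. No issues.
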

\begin{proof}
From Lemma~$\ref{lem:2.2}$~$(3)$ and $(4)$, 
we have 
\begin{align*}
N_{4} 
&= N_{8} N_{16} - 4 b_{0} b_{2} + 2 (b_{1}^{2} + b_{3}^{2}) \\ 
&= N_{8} N_{16} - 4 ( b_{0} b_{2} + b_{1} b_{3} ) + 2 ( b_{1} + b_{3} )^{2} \\ 
&\equiv N_{8} N_{16} - 4 ( b_{0} b_{2} + b_{1} b_{3} ) + 2 \hspace{2cm} \pmod{16}. 
\end{align*}
\end{proof}

\begin{lem}\label{lem:4.3}
If $c_{0} \not\equiv c_{2}, \: c_{1} \not\equiv c_{3} \pmod{2}$, 
then 
\begin{align*}
\Re{(\alpha_{2})} \equiv (- 1)^{c_{2}} + 2 (c_{0} c_{2} + c_{1} c_{3}) \pmod{8}, \\ 
\Im{(\alpha_{2})} \equiv (- 1)^{c_{1}} + 2 (c_{0} c_{2} + c_{1} c_{3}) \pmod{8}. 
\end{align*}
\end{lem}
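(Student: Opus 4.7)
The plan is to derive both congruences from Lemma~$\ref{lem:2.3}$ by a single completing-the-square trick, since the two statements are entirely symmetric (swap $(c_{0}, c_{2}) \leftrightarrow (c_{3}, c_{1})$ and $c_{1} c_{3} \leftrightarrow c_{0} c_{2}$).

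For the real part, Lemma~$\ref{lem:2.3}$ gives $\Re(\alpha_{2}) = c_{0}^{2} - c_{2}^{2} + 2 c_{1} c_{3}$, so it suffices to prove
$$
c_{0}^{2} - c_{2}^{2} - 2 c_{0} c_{2} \equiv (-1)^{c_{2}} \pmod{8}.
$$
The idea is to rewrite the left-hand side as $(c_{0} - c_{2})^{2} - 2 c_{2}^{2}$. The hypothesis $c_{0} \not\equiv c_{2} \pmod{2}$ makes $c_{0} - c_{2}$ odd, so $(c_{0} - c_{2})^{2} \equiv 1 \pmod{8}$. For the remaining term I split on the parity of $c_{2}$: if $c_{2}$ is even, $2 c_{2}^{2} \equiv 0 \pmod{8}$; if $c_{2}$ is odd, then $c_{2}^{2} \equiv 1 \pmod{8}$ so $2 c_{2}^{2} \equiv 2 \pmod{8}$. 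In both cases $(c_{0} - c_{2})^{2} - 2 c_{2}^{2} \equiv (-1)^{c_{2}} \pmod{8}$, which is the desired congruence.

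For the imaginary part, $\Im(\alpha_{2}) = c_{3}^{2} - c_{1}^{2} + 2 c_{0} c_{2}$, and the target reduces to
$$
c_{3}^{2} - c_{1}^{2} - 2 c_{1} c_{3} \equiv (-1)^{c_{1}} \pmod{8},
$$
which I rewrite as $(c_{3} - c_{1})^{2} - 2 c_{1}^{2}$. The parity assumption $c_{1} \not\equiv c_{3} \pmod{2}$ makes $c_{3} - c_{1}$ odd, and then the same parity case-split on $c_{1}$ closes the argument.

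There is no real obstacle here; the only step requiring any insight is recognizing that grouping the terms as $(c_{0} - c_{2})^{2} - 2 c_{2}^{2}$ (respectively $(c_{3} - c_{1})^{2} - 2 c_{1}^{2}$) is what converts the parity hypothesis and the elementary fact ``odd squares are $\equiv 1 \pmod{8}$'' into the stated mod-$8$ identity.
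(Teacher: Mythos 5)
Your proof is correct and is essentially identical to the paper's: the paper also writes $\Re(\alpha_{2}) = (c_{0}-c_{2})^{2} - 2c_{2}^{2} + 2(c_{0}c_{2}+c_{1}c_{3})$, uses that $c_{0}-c_{2}$ is odd to get $(c_{0}-c_{2})^{2}\equiv 1 \pmod{8}$, and concludes $1 - 2c_{2}^{2} \equiv (-1)^{c_{2}} \pmod{8}$ by the same parity observation, handling the imaginary part symmetrically. No gaps.
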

\begin{proof}
From Lemma~$\ref{lem:2.3}$, 
we have 
\begin{align*}
\Re{(\alpha_{2})} 
&= c_{0}^{2} - c_{2}^{2} + 2 c_{1} c_{3} \\ 
&= ( c_{0} - c_{2} )^{2} - 2 c_{2}^{2} + 2 ( c_{0} c_{2} + c_{1} c_{3} ) \\ 
&\equiv 1 - 2 c_{2}^{2} + 2 ( c_{0} c_{2} + c_{1} c_{3} )  \\ 
&\equiv ( - 1 )^{c_{2}} + 2 ( c_{0} c_{2} + c_{1} c_{3} ) \hspace{2cm} \pmod{8}. 
\end{align*}
In the same way, 
the remaining one can also be proved. 
\end{proof}

\begin{rem}\label{rem:4.5}
For any $a, b, c, d \in \mathbb{Z}$, 
the following hold: 
\begin{enumerate}
\item[$(1)$] $a b + c d \equiv a c + b d \pmod{2}$ when $a + b \not\equiv c + d \pmod{2}$; 
\item[$(2)$] $4 a b (a^{2} +  b^{2}) \equiv 0 \pmod{8}$; 
\item[$(3)$] $a^{4} + b^{4} + 2 a^{2} b^{2} + 4 a b \equiv \dfrac{1}{2} \left\{ 1 - (- 1)^{a + b} \right\} \pmod{8}$. 
\end{enumerate}
\end{rem}

\begin{lem}\label{lem:4.4}
If $e_{0} + e_{4} \not\equiv e_{2} + e_{6}, \: e_{1} + e_{5} \not\equiv e_{3} + e_{7} \pmod{2}$, 
then 
$$
2 ( e_{0} e_{4} + e_{2} e_{6} + e_{1} e_{5} + e_{3} e_{7} ) \equiv b_{0} b_{2} + b_{1} b_{3} + c_{0} c_{2} + c_{1} c_{3} \pmod{4}. 
$$
\end{lem}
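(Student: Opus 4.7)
The plan is to introduce auxiliary variables $s_k := a_k + a_{k + 8}$ and $t_k := a_{k + 4} + a_{k + 12}$ for $0 \leq k \leq 3$, so that $b_k = s_k + t_k$ and $c_k = s_k - t_k$. A short expansion gives
\begin{align*}
b_k b_{k + 2} + c_k c_{k + 2} = (s_k + t_k)(s_{k + 2} + t_{k + 2}) + (s_k - t_k)(s_{k + 2} - t_{k + 2}) = 2(s_k s_{k + 2} + t_k t_{k + 2}),
\end{align*}
so summing over $k = 0, 1$ rewrites the right-hand side as $2(s_0 s_2 + t_0 t_2 + s_1 s_3 + t_1 t_3)$. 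Dividing the desired congruence by $2$, it then suffices to establish the parity statement
\begin{align*}
e_0 e_4 + e_2 e_6 + e_1 e_5 + e_3 e_7 \equiv s_0 s_2 + t_0 t_2 + s_1 s_3 + t_1 t_3 \pmod{2}.
\end{align*}

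To handle this I would use that $e_k = a_k - a_{k + 8} \equiv a_k + a_{k + 8} = s_k \pmod{2}$ for $0 \leq k \leq 3$, and analogously $e_{k + 4} \equiv t_k \pmod{2}$ for $0 \leq k \leq 3$. Substituting, the left-hand side becomes $s_0 t_0 + s_2 t_2 + s_1 t_1 + s_3 t_3$ modulo $2$. The task is then to compare $s_0 t_0 + s_2 t_2$ with $s_0 s_2 + t_0 t_2$, and similarly for the odd-indexed pair.

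This is exactly the content of Remark~$\ref{rem:4.5}$~$(1)$. The hypothesis $e_0 + e_4 \not\equiv e_2 + e_6 \pmod{2}$ translates under the above reductions into $s_0 + t_0 \not\equiv s_2 + t_2 \pmod{2}$, so applying Remark~$\ref{rem:4.5}$~$(1)$ with $(a, b, c, d) = (s_0, t_0, s_2, t_2)$ gives $s_0 t_0 + s_2 t_2 \equiv s_0 s_2 + t_0 t_2 \pmod{2}$. The hypothesis on the odd-indexed $e_k$'s yields the analogous identity $s_1 t_1 + s_3 t_3 \equiv s_1 s_3 + t_1 t_3 \pmod{2}$, and adding these completes the parity statement.

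All manipulations are routine; the only real step is spotting the substitution $s_k, t_k$, which simultaneously collapses $b_k b_{k + 2} + c_k c_{k + 2}$ to a symmetric bilinear form in $s, t$ and makes $e_k, e_{k + 4}$ reduce modulo $2$ to $s_k, t_k$. Once that symmetry is in place, Remark~$\ref{rem:4.5}$~$(1)$ is exactly the swap identity one needs, and no case analysis on the individual parities is necessary.
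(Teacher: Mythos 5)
Your proof is correct and takes essentially the same route as the paper's: both rest on the identity $b_k b_{k+2} + c_k c_{k+2} = 2(s_k s_{k+2} + t_k t_{k+2})$ with $s_k = a_k + a_{k+8}$, $t_k = a_{k+4} + a_{k+12}$, followed by a single application of Remark~\ref{rem:4.5}~(1) to swap the pairing. The only cosmetic difference is that the paper first reduces everything to the $e_k$'s modulo $2$ and then swaps, whereas you swap in the $s,t$ variables; modulo $2$ these are the same manipulation.
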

\begin{proof}
From Remark~$\ref{rem:4.5}$~$(1)$, we have  
\begin{align*}
b_{0} b_{2} + b_{1} b_{3} + c_{0} c_{2} + c_{1} c_{3} 
&= 2 (a_{0} + a_{8}) (a_{2} + a_{10}) + 2 (a_{4} + a_{12}) (a_{6} + a_{14}) \\ 
&\quad + 2 (a_{1} + a_{9}) (a_{3} + a_{11}) + 2 (a_{5} + a_{13}) (a_{7} + a_{15}) \\ 
&\equiv 2 (e_{0} e_{2} + e_{4} e_{6}) + 2 (e_{1} e_{3} + e_{5} e_{7}) && \\ 
&\equiv 2 (e_{0} e_{4} + e_{2} e_{6}) + 2 (e_{1} e_{5} + e_{3} e_{7}) && \pmod{4}. 
\end{align*}
\end{proof}

\begin{lem}\label{lem:4.6}
If $e_{0} + e_{4} \not\equiv e_{2} + e_{6}, \: e_{1} + e_{5} \not\equiv e_{3} + e_{7} \pmod{2}$, 
then 
\begin{align*}
\Re{(\alpha_{1})} \equiv (- 1)^{b_{2}} + 2 ( b_{0} b_{2} + b_{1} b_{3} + c_{0} c_{2} + c_{1} c_{3} ) \pmod{8}, \\ 
\Im{(\alpha_{1})} \equiv (- 1)^{b_{1}} + 2 ( b_{0} b_{2} + b_{1} b_{3} + c_{0} c_{2} + c_{1} c_{3})\pmod{8}. 
\end{align*}
\end{lem}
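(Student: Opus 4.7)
The plan is to recast $\alpha_{1}$ in a form that mirrors the formulas for $\alpha_{2}$ in Lemma~\ref{lem:2.3}, so that the argument of Lemma~\ref{lem:4.3} applies directly.

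For odd $l$ one has $f(\zeta_{16}^{l}) = g(\zeta_{16}^{l})$, where $g(x) := \sum_{k = 0}^{7} e_{k} x^{k}$, and since $\zeta_{16}^{9} = -\zeta_{16}$ and $\zeta_{16}^{13} = -\zeta_{16}^{5}$, we may write $\alpha_{1} = [g(\zeta_{16}) g(-\zeta_{16})] \cdot [g(\zeta_{16}^{5}) g(-\zeta_{16}^{5})]$. Each factor $g(x) g(-x)$ is a polynomial in $x^{2}$, so $\alpha_{1} = H(\zeta_{8}) H(-\zeta_{8})$ for some polynomial $H$. Reducing $H$ modulo $y^{4} + 1$ gives $H(y) \equiv B_{0} + B_{1} y + B_{2} y^{2} + B_{3} y^{3}$ for explicit polynomials $B_{k}$ in the $e_{i}$'s (for instance $B_{0} = e_{0}^{2} - e_{4}^{2} + 2(e_{1} e_{7} + e_{3} e_{5} - e_{2} e_{6})$), and a direct expansion of $H(\zeta_{8}) H(-\zeta_{8})$ yields
\[
\Re{(\alpha_{1})} = B_{0}^{2} - B_{2}^{2} + 2 B_{1} B_{3}, \qquad \Im{(\alpha_{1})} = B_{3}^{2} - B_{1}^{2} + 2 B_{0} B_{2},
\]
which is the same shape as $\Re{(\alpha_{2})}$ and $\Im{(\alpha_{2})}$ from Lemma~\ref{lem:2.3} under the correspondence $(c_{0}, c_{1}, c_{2}, c_{3}) \leftrightarrow (B_{0}, B_{1}, B_{2}, B_{3})$.

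A direct mod-$2$ check shows $B_{k} \equiv b_{k} \pmod{2}$ for each $k$, so the hypothesis together with Remark~\ref{rem:2.1} forces $B_{0} \not\equiv B_{2}$ and $B_{1} \not\equiv B_{3} \pmod{2}$. Applying Lemma~\ref{lem:4.3} with $(c_{0}, c_{1}, c_{2}, c_{3})$ replaced by $(B_{0}, B_{1}, B_{2}, B_{3})$ therefore gives
\[
\Re{(\alpha_{1})} \equiv (-1)^{b_{2}} + 2(B_{0} B_{2} + B_{1} B_{3}) \pmod{8}, \quad \Im{(\alpha_{1})} \equiv (-1)^{b_{1}} + 2(B_{0} B_{2} + B_{1} B_{3}) \pmod{8},
\]
so by Lemma~\ref{lem:4.4} the desired conclusion reduces to the single congruence
\[
B_{0} B_{2} + B_{1} B_{3} \equiv 2(e_{0} e_{4} + e_{2} e_{6} + e_{1} e_{5} + e_{3} e_{7}) \pmod{4}.
\]

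This last congruence is the main obstacle. Writing $B_{k} = (e_{i}^{2} - e_{j}^{2}) + 2 N_{k}$ for appropriate $N_{k}$, and using that $(e_{i}^{2} - e_{j}^{2})(e_{k}^{2} - e_{l}^{2}) \equiv 0 \pmod{4}$ under the parity hypothesis (since one of the two squared differences is itself $\equiv 0 \pmod{4}$), the problem collapses to a mod-$2$ polynomial identity in the $e_{i}$'s. I would finish by substituting $(e_{0} + e_{4}) \equiv 1 + (e_{2} + e_{6}) \pmod{2}$ and $(e_{1} + e_{5}) \equiv 1 + (e_{3} + e_{7}) \pmod{2}$, which causes cross terms like $(e_{0} + e_{4})(e_{2} + e_{6})$ and $(e_{1} + e_{5})(e_{3} + e_{7})$ to vanish mod $2$, and then by one final use of the same trick---for example $(e_{0} + e_{2})(e_{4} + e_{6}) \equiv (e_{0} + e_{2})(1 + e_{0} + e_{2}) \equiv 0 \pmod{2}$ under parity, and similarly for the odd-indexed variables---to match the target.
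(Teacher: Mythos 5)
Your proof takes a genuinely different route from the paper's, and it is correct. The paper starts from the explicit quartic expansion of $\Re{(\alpha_{1})}$ and $\Im{(\alpha_{1})}$ in Lemma~\ref{lem:2.3} and grinds it down modulo $8$ through repeated use of Remark~\ref{rem:4.5}; you instead exploit the structural fact that $\alpha_{1} = H(\zeta_{8}) H(-\zeta_{8})$ for the integer polynomial $H$ with $g(x) g(-x) = H(x^{2})$, so that $\alpha_{1}$ has exactly the same shape as $\alpha_{2}$ with $(c_{0}, c_{1}, c_{2}, c_{3})$ replaced by $(B_{0}, B_{1}, B_{2}, B_{3})$, and the (purely algebraic) argument of Lemma~\ref{lem:4.3} applies verbatim since $B_{k} \equiv b_{k} \pmod{2}$. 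I checked your reduction: everything comes down to $B_{0} B_{2} + B_{1} B_{3} \equiv 2 (e_{0} e_{4} + e_{2} e_{6} + e_{1} e_{5} + e_{3} e_{7}) \pmod{4}$, and after discarding the products of the two ``leading'' squared differences (one factor of each pair is divisible by $4$ under the hypothesis) this is a mod-$2$ identity which does hold: the surviving terms collapse via $(e_{0} + e_{4})(e_{2} + e_{6}) \equiv (e_{1} + e_{3})(e_{5} + e_{7}) \equiv 0$, the vanishing of $(e_{i} + e_{j}) e_{i} e_{j}$, and Remark~\ref{rem:4.5}~$(1)$, exactly the tricks you name. Your last step is only sketched rather than fully executed, but the ingredients are the right ones and the computation goes through. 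What your approach buys is conceptual economy --- one congruence lemma (Lemma~\ref{lem:4.3}) serves both $\alpha_{1}$ and $\alpha_{2}$, and the role of the hypothesis via Remark~\ref{rem:2.1} becomes transparent; what the paper's direct expansion buys is that it never needs to introduce the auxiliary quantities $B_{k}$ or justify transplanting a lemma stated for specific integers to new ones.
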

\begin{proof}
Noting that either $e_{1}^{2} - e_{5}^{2}$ or $e_{3}^{2} - e_{7}^{2}$ is a multiple of $4$, 
from Lemma~$\ref{lem:2.3}$,  
\begin{align*}
\Re{(\alpha_{1})} 
&\equiv 
e_{0}^{4} + e_{4}^{4} - e_{2}^{4} - e_{6}^{4} - 6 e_{0}^{2} e_{4}^{2} + 6 e_{2}^{2} e_{6}^{2} \\ 
&\quad + 4 (e_{2} e_{6} + e_{1} e_{7} + e_{3} e_{5}) (e_{0}^{2} - e_{4}^{2}) - 4 (e_{0} e_{6} + e_{2} e_{4} - e_{1} e_{5}) (e_{1}^{2} - e_{5}^{2}) \\ 
&\quad + 4 (e_{0} e_{4} + e_{1} e_{3} - e_{5} e_{7}) (e_{2}^{2} - e_{6}^{2}) - 4 (e_{0} e_{2} - e_{4} e_{6} + e_{3} e_{7}) (e_{3}^{2} - e_{7}^{2}) \pmod{8}. 
\end{align*}
Since $-2 \equiv 6, \: - 4 \equiv 4 \pmod{8}$, 
\begin{align*}
\Re{(\alpha_{1})} 
&\equiv 
e_{0}^{4} + e_{4}^{4} - e_{2}^{4} - e_{6}^{4} + 2 e_{0}^{2} e_{4}^{2} - 2 e_{2}^{2} e_{6}^{2} \\ 
&\quad + 4 (e_{2} e_{6} + e_{1} e_{7} + e_{3} e_{5}) (e_{0}^{2} + e_{4}^{2}) + 4 (e_{0} e_{6} + e_{2} e_{4} + e_{1} e_{5}) (e_{1}^{2} + e_{5}^{2}) \\ 
&\quad + 4 (e_{0} e_{4} + e_{1} e_{3} + e_{5} e_{7}) (e_{2}^{2} + e_{6}^{2}) + 4 (e_{0} e_{2} + e_{4} e_{6} + e_{3} e_{7}) (e_{3}^{2} + e_{7}^{2}) \pmod{8}. 
\end{align*}
From Remark~$\ref{rem:4.5}$~$(2)$, 
\begin{align*}
\Re{(\alpha_{1})} 
&\equiv 
e_{0}^{4} + e_{4}^{4} - e_{2}^{4} - e_{6}^{4} + 2 e_{0}^{2} e_{4}^{2} - 2 e_{2}^{2} e_{6}^{2} \\ 
&\quad + 4 (e_{2} e_{6} + e_{1} e_{7} + e_{3} e_{5}) (e_{0}^{2} + e_{4}^{2}) + 4 (e_{0} e_{6} + e_{2} e_{4} + e_{1} e_{5}) (e_{1}^{2} + e_{5}^{2}) \\ 
&\quad + 4 (e_{0} e_{4} + e_{1} e_{3} + e_{5} e_{7}) (e_{2}^{2} + e_{6}^{2}) + 4 (e_{0} e_{2} + e_{4} e_{6} + e_{3} e_{7}) (e_{3}^{2} + e_{7}^{2}) \\ 
&\quad + 4 e_{0} e_{4} (e_{0}^{2} + e_{4}^{2}) - 4 e_{1} e_{5} (e_{1}^{2} + e_{5}^{2}) + 4 e_{2} e_{6} (e_{2}^{2} + e_{6}^{2}) - 4 e_{3} e_{7} (e_{3}^{2} + e_{7}^{2}) && \\ 
&\equiv 
e_{0}^{4} + e_{4}^{4} - e_{2}^{4} - e_{6}^{4} + 2 e_{0}^{2} e_{4}^{2} - 2 e_{2}^{2} e_{6}^{2} \\ 
&\quad + 4 (e_{0} e_{4} + e_{2} e_{6} + e_{1} e_{7} + e_{3} e_{5}) (e_{0}^{2} + e_{4}^{2}) + 4 (e_{0} e_{6} + e_{2} e_{4}) (e_{1}^{2} + e_{5}^{2}) \\ 
&\quad + 4 (e_{0} e_{4} + e_{2} e_{6} + e_{1} e_{3} + e_{5} e_{7}) (e_{2}^{2} + e_{6}^{2}) + 4 (e_{0} e_{2} + e_{4} e_{6}) (e_{3}^{2} + e_{7}^{2}) &&\pmod{8}. 
\end{align*}
From Remark~$\ref{rem:4.5}$~$(1)$, 
\begin{align*}
\Re{(\alpha_{1})} 
&\equiv 
e_{0}^{4} + e_{4}^{4} - e_{2}^{4} - e_{6}^{4} + 2 e_{0}^{2} e_{4}^{2} - 2 e_{2}^{2} e_{6}^{2} \\ 
&\quad + 4 (e_{0} e_{4} + e_{2} e_{6} + e_{1} e_{5} + e_{3} e_{7}) (e_{0}^{2} + e_{4}^{2}) + 4 (e_{0} e_{4} + e_{2} e_{6}) (e_{1}^{2} + e_{5}^{2}) \\ 
&\quad + 4 (e_{0} e_{4} + e_{2} e_{6} + e_{1} e_{5} + e_{3} e_{7}) (e_{2}^{2} + e_{6}^{2}) + 4 (e_{0} e_{4} + e_{2} e_{6}) (e_{3}^{2} + e_{7}^{2}) && \\ 
&\equiv e_{0}^{4} + e_{4}^{4} + 2 e_{0}^{2} e_{4}^{2} - ( e_{2}^{4} + e_{6}^{4} + 2 e_{2}^{2} e_{6}^{2} ) \\ 
&\quad + 4 (e_{0} e_{4} + e_{2} e_{6} + e_{1} e_{5} + e_{3} e_{7}) (e_{0}^{2} + e_{4}^{2} + e_{2}^{2} + e_{6}^{2}) \\ 
&\quad + 4 (e_{0} e_{4} + e_{2} e_{6}) (e_{1}^{2} + e_{5}^{2} + e_{3}^{2} + e_{7}^{2}) &&\pmod{8}. 
\end{align*}
 Since $e_{0}^{2} + e_{2}^{2} + e_{4}^{2} + e_{6}^{2}$ and $e_{1}^{2} + e_{3}^{2} + e_{5}^{2} + e_{7}^{2}$ are odd numbers, 
\begin{align*}
\Re{(\alpha_{1})} 
&\equiv 
( e_{0}^{4} + e_{4}^{4} + 2 e_{0}^{2} e_{4}^{2} + 4 e_{0} e_{4} ) - ( e_{2}^{4} + e_{6}^{4} + 2 e_{2}^{2} e_{6}^{2} + 4 e_{2} e_{6} ) \\ 
&\quad + 4 (e_{0} e_{4} + e_{2} e_{6} + e_{1} e_{5} + e_{3} e_{7}) && \pmod{8}. 
\end{align*}
From Remark~$\ref{rem:4.5}$~$(3)$ and Lemma~$\ref{lem:4.4}$, 
\begin{align*}
\Re{(\alpha_{1})} 
&\equiv \frac{1}{2} \left\{ 1 - (- 1)^{e_{0} + e_{4}} - 1 + (- 1)^{e_{2} + e_{6}} \right\} + 4 ( e_{0} e_{4} + e_{2} e_{6} + e_{1} e_{5} + e_{3} e_{7} ) && \\ 
&\equiv (- 1)^{b_{2}} + 4 ( e_{0} e_{4} + e_{2} e_{6} + e_{1} e_{5} + e_{3} e_{7} ) && \\ 
&\equiv (- 1)^{b_{2}} + 2 (b_{0} b_{2} + b_{1} b_{3} + c_{0} c_{2} + c_{1} c_{3}) && \pmod{8}. 
\end{align*}
In the same way, the remaining one can also be proved. 
\end{proof}

\begin{lem}\label{lem:4.7}
Suppose that 
$\Re{(\alpha_{1})}$, $\Im{(\alpha_{1})}$, $\Re{(\alpha_{2})}$, $\Im{(\alpha_{2})} \in \{ 8 m \pm 1 \: | \: m \in \mathbb{Z} \}$. 
Then $N_{4} \equiv N_{8} N_{16} + 2 \pmod{16}$ holds. 
\end{lem}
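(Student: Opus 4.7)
The plan is to reduce the statement, via Lemma~\ref{lem:4.2}, to the single congruence $b_{0} b_{2} + b_{1} b_{3} \equiv 0 \pmod{4}$; substituting this into Lemma~\ref{lem:4.2} immediately yields $N_{4} \equiv N_{8} N_{16} + 2 \pmod{16}$, since $4(b_{0} b_{2} + b_{1} b_{3})$ will then be divisible by $16$. Before applying Lemma~\ref{lem:4.2} I first verify that its parity hypothesis, and those of Lemmas~\ref{lem:4.3} and~\ref{lem:4.6}, are in force. Since the hypothesis makes each of $\Re(\alpha_{1}), \Im(\alpha_{1}), \Re(\alpha_{2}), \Im(\alpha_{2})$ odd, Lemma~\ref{lem:2.2}\,(1)(2) gives $N_{1}, N_{2} \in 2 \mathbb{Z}_{\rm odd}$, and then Lemma~\ref{lem:3.4} together with Remark~\ref{rem:2.1} supplies simultaneously the $b_{k}$, $c_{k}$ and $e_{k}$ parity conditions required downstream.

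The heart of the argument is to convert the $\{8m \pm 1\}$ constraint into $4$-divisibility of the bilinear sums appearing in Lemma~\ref{lem:4.2}. Rewriting Lemma~\ref{lem:4.3} as
\[
\Re(\alpha_{2}) - (-1)^{c_{2}} \equiv 2 (c_{0} c_{2} + c_{1} c_{3}) \pmod{8},
\]
the left-hand side lies in $\{-2, 0, 2\} \pmod{8}$ since $\Re(\alpha_{2}) \equiv \pm 1 \pmod{8}$ and $(-1)^{c_{2}} \in \{\pm 1\}$. On the other hand, the parity constraint $c_{0} \not\equiv c_{2},\, c_{1} \not\equiv c_{3} \pmod{2}$ forces both $c_{0} c_{2}$ and $c_{1} c_{3}$ to be even, so the right-hand side lies in $\{0, 4\} \pmod{8}$. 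The only common residue is $0$, whence $c_{0} c_{2} + c_{1} c_{3} \equiv 0 \pmod{4}$.

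Repeating this argument verbatim with Lemma~\ref{lem:4.6} and $\Re(\alpha_{1}) \equiv \pm 1 \pmod{8}$, and using that $b_{0} b_{2} + b_{1} b_{3}$ is likewise even thanks to $b_{0} \not\equiv b_{2},\, b_{1} \not\equiv b_{3} \pmod{2}$, I obtain
\[
b_{0} b_{2} + b_{1} b_{3} + c_{0} c_{2} + c_{1} c_{3} \equiv 0 \pmod{4}.
\]
Subtracting the previous congruence isolates $b_{0} b_{2} + b_{1} b_{3} \equiv 0 \pmod{4}$, and an application of Lemma~\ref{lem:4.2} then finishes the proof. I do not foresee a real obstacle here; the only mildly delicate point is the sign interplay between the $(-1)^{c_{2}}, (-1)^{b_{2}}$ terms and the $\pm 1 \pmod{8}$ hypothesis, but this collapses once the bilinear sums are known to be even. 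Note in passing that only the real parts of $\alpha_{1}, \alpha_{2}$ are actually used in this reduction.
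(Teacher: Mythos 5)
Your proposal is correct and follows essentially the same route as the paper: deduce $c_{0} c_{2} + c_{1} c_{3} \equiv 0$ and $b_{0} b_{2} + b_{1} b_{3} + c_{0} c_{2} + c_{1} c_{3} \equiv 0 \pmod{4}$ from Lemmas~\ref{lem:4.3} and~\ref{lem:4.6}, subtract to get $b_{0} b_{2} + b_{1} b_{3} \equiv 0 \pmod{4}$, and conclude via Lemma~\ref{lem:4.2}. The only difference is that you spell out the parity bookkeeping and the residue intersection $\{-2,0,2\}\cap\{0,4\}=\{0\}$ that the paper leaves implicit.
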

\begin{proof}
Under the assumption, 
since $c_{0} \not\equiv c_{2}, \: c_{1} \not\equiv c_{3} \pmod{2}$, 
we have 
$$
c_{0} c_{2} + c_{1} c_{3} \equiv 0, \quad b_{0} b_{2} + b_{1} b_{3} + c_{0} c_{2} + c_{1} c_{3} \equiv 0 \pmod{4} 
$$
from Lemmas~$\ref{lem:4.3}$ and $\ref{lem:4.6}$, respectively. 
That is, $b_{0} b_{2} + b_{1} b_{3} \equiv 0 \pmod{4}$. 
From this and Lemma~$\ref{lem:4.2}$, 
the lemma is proved. 
\end{proof}

\begin{lem}\label{lem:4.8}
Let $p_{i} = a_{i}^{2} + b_{i}^{2} \equiv 1 \pmod{8}$ be a prime 
with $a_{i} \pm b_{i} \in \{8 m \pm 1 \mid m \in \mathbb{Z} \}$ for each $1 \leq i \leq r$,  
let $p_{r + 1}, \ldots, p_{r + s} \equiv 7 \pmod{8}$ be primes, 
and let $k_{1}, \ldots, k_{r + s}$ be non-negative integers. 
If $\alpha \in \mathbb{Z} [\sqrt{-1}]$ satisfies 
$\alpha \overline{\alpha} = 2 p_{1}^{k_{1}} \cdots p_{r}^{k_{r}} p_{r + 1}^{2 k_{r + 1}} \cdots p_{r + s}^{2 k_{r + s}}$, then 
\begin{align*}
\Re{(\alpha)}, \: \Im{(\alpha)} \in \{8 m \pm 1 \mid m \in \mathbb{Z} \}. 
\end{align*}
\end{lem}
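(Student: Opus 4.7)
The plan is to factor $\alpha$ in the Gaussian integers $\mathbb{Z}[\sqrt{-1}]$ and track its residue modulo $(1 + \sqrt{-1})^{5}$. Since $\alpha \overline{\alpha} = 2 \cdot (\textrm{odd})$, the valuation of $\alpha$ at the ramified prime $1 + \sqrt{-1}$ equals $1$, so one can write $\alpha = (1 + \sqrt{-1}) \beta$ with $\beta$ coprime to $1 + \sqrt{-1}$ and $\beta \overline{\beta} = \prod_{i = 1}^{r} p_{i}^{k_{i}} \prod_{j = r + 1}^{r + s} p_{j}^{2 k_{j}}$. The identity $(1 + \sqrt{-1})^{6} = -8 \sqrt{-1}$ gives $8 \mathbb{Z}[\sqrt{-1}] = (1 + \sqrt{-1})^{6} \mathbb{Z}[\sqrt{-1}]$, so the target conclusion $\alpha \equiv \pm 1 \pm \sqrt{-1} \pmod{8}$ is equivalent to $\beta$ reducing to one of the four units $\pm 1, \pm \sqrt{-1}$ modulo $(1 + \sqrt{-1})^{5}$.

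Next I would factor $\beta$ using the splitting behavior in $\mathbb{Z}[\sqrt{-1}]$: each $p_{i}$ with $i \leq r$ satisfies $p_{i} \equiv 1 \pmod{4}$ and splits as $p_{i} = \pi_{i} \overline{\pi_{i}}$ with $\pi_{i} = a_{i} + b_{i} \sqrt{-1}$, while each $p_{j}$ with $j > r$ satisfies $p_{j} \equiv 3 \pmod{4}$ and is inert. Hence
\begin{align*}
\beta = u \prod_{i = 1}^{r} \pi_{i}^{s_{i}} \overline{\pi_{i}}^{k_{i} - s_{i}} \prod_{j = r + 1}^{r + s} p_{j}^{k_{j}}
\end{align*}
for some unit $u \in \{ \pm 1, \pm \sqrt{-1} \}$ and some $0 \leq s_{i} \leq k_{i}$. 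Since $\{ \pm 1, \pm \sqrt{-1} \}$ is closed under multiplication in $(\mathbb{Z}[\sqrt{-1}] / (1 + \sqrt{-1})^{5})^{*}$, it suffices to show that each factor $\pi_{i}$, $\overline{\pi_{i}}$, $p_{j}$ lies in $\{ \pm 1, \pm \sqrt{-1} \}$ modulo $(1 + \sqrt{-1})^{5}$. For the inert primes this is immediate, since $(1 + \sqrt{-1})^{5} \cap \mathbb{Z} = 8 \mathbb{Z}$ and $p_{j} \equiv 7 \equiv -1 \pmod{8}$.

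The main computation is the split-prime case. From $(1 + \sqrt{-1})^{5} = -4 - 4 \sqrt{-1}$, a direct check shows that $a + b \sqrt{-1} \equiv a' + b' \sqrt{-1} \pmod{(1 + \sqrt{-1})^{5}}$ if and only if $b \equiv b' \pmod{4}$ and $a + b \equiv a' + b' \pmod{8}$. In these invariants the four units $1, \sqrt{-1}, -1, -\sqrt{-1}$ correspond to $(0, 1), (1, 1), (0, 7), (3, 7)$ respectively. Given $\pi_{i} = a_{i} + b_{i} \sqrt{-1}$ with $a_{i}^{2} + b_{i}^{2} \equiv 1 \pmod{8}$ and $a_{i} \pm b_{i} \equiv \pm 1 \pmod{8}$, one recovers $b_{i} \bmod 4$ from the identity $2 b_{i} = (a_{i} + b_{i}) - (a_{i} - b_{i})$, and a case split on the four sign patterns for $(a_{i} + b_{i}, a_{i} - b_{i})$ modulo $8$ matches each to exactly one of the four unit residues above. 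The same argument applies to $\overline{\pi_{i}}$ because the hypothesis is invariant under $b_{i} \mapsto -b_{i}$.

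Once this case analysis is done, multiplicativity yields $\beta$ congruent to a unit modulo $(1 + \sqrt{-1})^{5}$, whence $\alpha \equiv \pm 1 \pm \sqrt{-1} \pmod{8}$, giving the desired conclusion. The main obstacle is this final case analysis: elementary but delicate bookkeeping to match each sign pattern to the correct unit residue modulo $(1 + \sqrt{-1})^{5}$.
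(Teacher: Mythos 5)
Your proposal is correct and is essentially the paper's own argument: both reduce to the Gaussian prime factorization of $\alpha$ and verify, prime factor by prime factor, that multiplying by each $\pi_{i}$, $\overline{\pi_{i}}$ or inert $p_{j}$ preserves the relevant residue class, the hypothesis $a_{i} \pm b_{i} \equiv \pm 1 \pmod{8}$ entering exactly at that step. The only difference is bookkeeping: you classify residues in $\mathbb{Z}[\sqrt{-1}]/(1+\sqrt{-1})^{5}$ and invoke multiplicativity of the unit subgroup, whereas the paper runs a direct induction computing $\Re$ and $\Im$ of the successive products modulo $8$.
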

\begin{proof}
Obviously, the statement of the lemma holds when $k_{1} = k_{2} = \cdots = k_{r + s} = 0$. 
Let $\beta \in \mathbb{Z} [\sqrt{-1}]$ satisfy 
\begin{align*}
\Re{(\beta)}, \: \Im{(\beta)} \in \{8 m \pm 1 \mid m \in \mathbb{Z} \}. 
\end{align*}
For $1 \leq i \leq r$, let $\pi_{i} := a_{i} + \sqrt{- 1} b_{i}$. Then the following holds: 
\begin{align*}
\Re{(\beta \pi_{i})} &= \Re{(\beta)} \Re{(\pi_{i})} - \Im{(\beta)} \Im{(\pi_{i})} \equiv \pm 1 \pmod{8}, \\ 
\Im{(\beta \pi_{i})} &= \Re{(\beta)} \Im{(\pi_{i})} + \Im{(\beta)} \Re{(\pi_{i})} \equiv \pm 1 \pmod{8}. 
\end{align*}
Also, for $r + 1 \leq i \leq r + s$, the following holds: 
\begin{align*}
\Re{(\beta p_{i})} &= \Re{(\beta)} p_{i} \equiv \pm 1 \pmod{8}, \\ 
\Im{(\beta p_{i})} &= \Im{(\beta)} p_{i} \equiv \pm 1 \pmod{8}. 
\end{align*}
Therefore, by induction, we can prove that the statement of the lemma holds for arbitrary non-negative integers $k_{1}, k_{2}, \ldots, k_{r + s}$. 
\end{proof}

\begin{proof}[Proof of Theorem~$\ref{thm:4.1}$]
We prove by contradiction. 
Assume that there exist $a_{0}, \ldots, a_{15} \in \mathbb{Z}$ satisfying 
$$
D(a_{0}, a_{1}, \ldots, a_{15}) 
= 64 p_{1}^{k_{1}} \cdots p_{r}^{k_{r}} p_{r + 1}^{k_{r + 1}} \cdots p_{r + s}^{k_{r + s}} Q, 
$$
where $Q = 1$ or $q_{1} q_{2} \cdots q_{t}$. 
Then, it follows from Theorem~$\ref{thm:3.1}$ and Lemma~$\ref{lem:2.5}$ that there exist $l_{i}, m_{i}, n_{i} \geq 0$ such that 
\begin{align*}
N_{1} &= 2 p_{1}^{l_{1}} \cdots p_{r}^{l_{r}} p_{r + 1}^{2 l_{r + 1}} \cdots p_{r + s}^{2 l_{r + s}}, \\
N_{2} &= 2 p_{1}^{m_{1}} \cdots p_{r}^{m_{r}} p_{r + 1}^{2 m_{r + 1}} \cdots p_{r + s}^{2 m_{r + s}}, \\
N_{4} &= 2 p_{1}^{n_{1}} \cdots p_{r}^{n_{r}} p_{r + 1}^{2 n_{r + 1}} \cdots p_{r + s}^{2 n_{r + s}}, \\ 
N_{8} N_{16} &= 8 p_{1}^{k_{1} - l_{1} - m_{1} - n_{1}} \cdots p_{r}^{k_{r} - l_{r} - m_{r} - n_{r}} 
p_{r + 1}^{k_{r + 1} - 2 (l_{r + 1} + m_{r + 1} + n_{r + 1})} \cdots p_{r + s}^{k_{r + s} - 2 (l_{r + s} + m_{r + s} + n_{r + s})} Q. 
\end{align*}
Note that $N_{4} \equiv 2$, $N_{8} N_{16} \equiv 8 \pmod{16}$. 
On the other hand, 
applying Lemma~$\ref{lem:4.8}$ to $N_{1}$ and $N_{2}$, 
using Lemma~$\ref{lem:4.7}$, we have $N_{4} \equiv N_{8} N_{16} + 2 \pmod{16}$. 
This is a contradiction. 
\end{proof}

\section{Possible values}

In this section, 
we determine all possible values. 
From the Kaiblinger's result mentioned in the introduction, 
we know that $\mathbb{Z}_{{\rm odd}} \cup 128 \mathbb{Z} \subset S(16)$. 
Since $S(n)$ is closed under multiplication, 
to prove the following theorem completes the proof of Theorem~$\ref{thm:1.1}$. 

\begin{thm}\label{thm:5.1}
The following hold: 
\begin{enumerate}
\item[$(1)$] Suppose that $p$ is a prime with $p \equiv 5 \pmod{8}$, then $64 p \in S(16)$; 
\item[$(2)$] Suppose that $p$ is a prime with $p \equiv 3 \pmod{8}$, then $64 p^{2} \in S(16)$; 
\item[$(3)$] Suppose that $p$ is a prime with $p = a^{2} + b^{2} \equiv 1 \pmod{8}$ and $a \pm b \in \left\{ 8 m \pm 3 \mid m \in \mathbb{Z} \right\}$, 
then $64 p \in S(16)$. 
\end{enumerate}
\end{thm}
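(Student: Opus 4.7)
The plan is to prove each case of Theorem~\ref{thm:5.1} by exhibiting an explicit integer tuple $(a_0, a_1, \ldots, a_{15})$ whose circulant determinant equals the target value. Since $D(a_0, \ldots, a_{15}) = N_1 N_2 N_4 (N_8 N_{16})$ and, by Theorem~\ref{thm:3.1}, any value in $64 \mathbb{Z}_{{\rm odd}}$ decomposes as $N_1, N_2, N_4 \in 2 \mathbb{Z}_{{\rm odd}}$ together with $N_8 N_{16} \in 8 \mathbb{Z}_{{\rm odd}}$, I will arrange the prime $p$ (or $p^2$) to live in exactly one of the four factors, while the remaining three take their minimal admissible values.

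I work through the parametrization $(b_k, c_k, e_k)$ of Section~2. For $N_4$ and $N_8 N_{16}$, Lemma~\ref{lem:2.2}(3)--(4) shows that $(b_0, b_1, b_2, b_3) = (2, 1, 1, 0)$ yields $N_4 = 2$ and $N_8 N_{16} = 8$, which suffices for cases~(1) and~(3). For case~(2), the prime $p \equiv 3 \pmod 8$ is inert in $\mathbb{Z}[\sqrt{-1}]$, so $p^2$ must be absorbed by a single one of the $\mathbb{Z}[\sqrt{-1}]$-norms $N_1, N_2, N_4$; the cleanest is $N_4 = 2p^2 = p^2 + p^2$, realized by $(b_0, b_1, b_2, b_3) = \bigl((p+3)/2, (p+1)/2, (3-p)/2, (1-p)/2\bigr)$, which still gives $N_8 N_{16} = 8$. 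For $N_2$, using Lemma~\ref{lem:2.3}, the choice $(c_0, c_1, c_2, c_3) = (1, 0, 0, 1)$ gives $\alpha_2 = 1 + \sqrt{-1}$, hence $N_2 = 2$, handling cases~(2) and~(3). For case~(1) I instead want $\alpha_2 = (1 + \sqrt{-1})(a + b\sqrt{-1})$ so that $N_2 = 2p$, where $p = a^2 + b^2$; the $c_k$'s are then solved from the Gaussian-integer identities $\Re(\alpha_2) = a - b$, $\Im(\alpha_2) = a + b$. For $N_1$ in case~(3), the construction is analogous but via $e_k$'s: realize $\alpha_1 = (1 + \sqrt{-1})(a + b\sqrt{-1})$, so that $N_1 = 2p$; the hypothesis $a \pm b \in \{8m \pm 3\}$ is precisely the mod-$8$ condition that makes $\Re(\alpha_1), \Im(\alpha_1)$ fall outside the set $\{8m \pm 1\}$ appearing in the obstruction of Lemmas~\ref{lem:4.7}--\ref{lem:4.8}, so no contradiction arises from the congruence analysis used in Theorem~\ref{thm:4.1}.

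The main obstacle is the construction of $(e_0, \ldots, e_7)$ realizing the prescribed Gaussian integer $\alpha_1$ in case~(3): by Lemma~\ref{lem:2.3} the real and imaginary parts of $\alpha_1$ are degree-four polynomials in eight variables, and the chosen $(b_k, c_k, e_k)$ must moreover satisfy parity compatibility in order for the recovered $(a_0, \ldots, a_{15})$ via $a_k - a_{k+8} = e_k$ and $a_k + a_{k+8} = (b_k + c_k)/2$ etc.\ to be integers. In practice I would search for sparse tuples (say, with only two or three nonzero $e_k$) guided by the factored form $\alpha_1 = f(\zeta_{16}) f(\zeta_{16}^5) f(\zeta_{16}^9) f(\zeta_{16}^{13})$; once a candidate is found, the final verification reduces to direct substitution into Lemmas~\ref{lem:2.2} and~\ref{lem:2.3}.
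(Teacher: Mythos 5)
Your overall strategy---produce an explicit tuple $(a_{0},\dots,a_{15})$ and distribute the prime among the factors $N_{1}$, $N_{2}$, $N_{4}$, $N_{8}N_{16}$---is the same as the paper's (its Lemma~\ref{lem:5.2} is exactly such a list of tuples). However, two of your three placements of the prime are arithmetically impossible. For case~(1) you want $N_{2}=2p$ with $p\equiv 5\pmod 8$ via $\alpha_{2}=(1+\sqrt{-1})(a+b\sqrt{-1})$. But $\alpha_{2}=f(\zeta_{8})f(\zeta_{8}^{5})$ is the relative norm from $\mathbb{Q}(\zeta_{8})$ to $\mathbb{Q}(\sqrt{-1})$, and a prime $p\equiv 5\pmod 8$ does not split completely in $\mathbb{Q}(\zeta_{8})$; the Gaussian prime $\pi$ above $p$ is therefore inert in $\mathbb{Q}(\zeta_{8})/\mathbb{Q}(\sqrt{-1})$, so $v_{\pi}(\alpha_{2})$ is always even and $N_{2}=2p$ cannot occur. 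The paper instead puts $2p=(8k+3)^{2}+(8l+1)^{2}$ into $N_{4}=(b_{0}-b_{2})^{2}+(b_{1}-b_{3})^{2}$, the norm form of $\mathbb{Z}[\sqrt{-1}]$ itself, where no such obstruction exists. Similarly, for case~(3) you want $N_{1}=2p$ via $\alpha_{1}=(1+\sqrt{-1})(a+b\sqrt{-1})$; but $\alpha_{1}$ is the relative norm from $\mathbb{Q}(\zeta_{16})$ to $\mathbb{Q}(\sqrt{-1})$, and when $p\equiv 9\pmod{16}$ (e.g.\ $p=73=8^{2}+3^{2}$, which satisfies the case-(3) hypothesis) the prime $\pi$ has residue degree $2$ in that extension, so $v_{\pi}(\alpha_{1})$ is again forced to be even and $N_{1}=2p$ is unattainable. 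The paper avoids this by placing $2p$ into $N_{2}$, where $p\equiv 1\pmod 8$ guarantees complete splitting in $\mathbb{Q}(\zeta_{8})$.

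Beyond these placement errors, the proposal never actually produces the tuples, and the step you defer to a ``search'' is the entire content of the proof. The quantities $b_{k}$, $c_{k}$, $e_{k}$ are not independent parameters: they all arise from a single vector $(a_{0},\dots,a_{15})$ and are linked by congruences such as $b_{k}\equiv c_{k}\equiv e_{k}+e_{k+4}\pmod 2$ and $b_{k}+c_{k}\in 2\mathbb{Z}$, so prescribing the four norms separately does not establish that they are simultaneously realizable; indeed the congruence obstructions of Section~\ref{Section4} (Lemmas~\ref{lem:4.7} and~\ref{lem:4.8}) show that some combinations which look admissible factor by factor are in fact forbidden. Exhibiting compatible tuples, and in case~(3) converting the hypothesis $a\pm b\in\{8m\pm 3\}$ into an explicit representation $2p=(c_{0}^{2}-c_{2}^{2}+2c_{1}c_{3})^{2}+(c_{1}^{2}-c_{3}^{2}-2c_{0}c_{2})^{2}$ with controlled congruences on the $c_{i}$, is precisely the work carried out in the paper's Lemma~\ref{lem:5.2} and in the parity manipulations of its proof of Theorem~\ref{thm:5.1}~(3); your proposal leaves all of it open.
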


To prove Theorem~$\ref{thm:5.1}$, we use the following lemma. 

\begin{lem}\label{lem:5.2}
For any $k$, $l$, $m$, $n \in \mathbb{Z}$, the following hold: 
\begin{flalign*}
&(1)&&32 \left\{ (8 k + 3)^{2} + (8 l + 1)^{2} \right\} \in S(16); \\
&(2)&&64 \left\{ (4 k - 1)^{2} + 2 (4 l - 1)^{2} \right\}^{2} \in S(16); \\
&(3)&&32 \left\{ (4 k - 1)^{2} - (4 m - 2)^{2} + 2 (2 l - 1) 4 n \right\}^{2} \\ 
& &&+ 32 \left\{ (2 l - 1)^{2} - (4 n)^{2} - 2 (4 k - 1) (4 m - 2) \right\}^{2} \in S(16). \hspace{6cm}
&
\end{flalign*}
\end{lem}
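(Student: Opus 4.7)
The plan is to prove each item by exhibiting an explicit integer $16$-tuple $(a_{0}, \ldots, a_{15})$, parameterized by $k, l, m, n$, and then verifying that $\prod_{d \mid 16} N_{d}$ equals the stated value using the explicit formulas in Lemmas~$\ref{lem:2.2}$ and $\ref{lem:2.3}$. In each case the strategy is to choose the auxiliary quantities $b_{j}, c_{j}, e_{j}$ so that one specific $N_{d}$ absorbs the main non-trivial factor of the target, while the remaining four norms multiply to the power of $2$ out front.

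For item $(3)$, the right-hand side is precisely $32\, N_{2}$ under the substitution $c_{0} = 4k-1$, $c_{1} = 2l-1$, $c_{2} = 4m-2$, $c_{3} = 4n$ in the formula of Lemma~$\ref{lem:2.2}$~$(2)$ (the apparent sign discrepancy in the second bracket disappears upon squaring). I would therefore pick $a_{i}$'s realizing these $c_{j}$-values, and tune the rest so that, say, $N_{1} = N_{4} = 2$ and $N_{8} N_{16} = 8$. For item $(1)$, Lemma~$\ref{lem:2.2}$~$(3)$ immediately gives $N_{4} = (8k+3)^{2} + (8l+1)^{2}$ as soon as $b_{0} - b_{2} = 8k+3$ and $b_{1} - b_{3} = 8l+1$, and again the remaining norms are tuned to multiply to $32$.

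Item $(2)$ is the most delicate. Because $(4k-1)^{2} + 2(4l-1)^{2} \equiv 3 \pmod{8}$ has a prime factor $\equiv 3 \pmod{4}$ to odd power, it is not itself a sum of two squares, so its square has (in generic cases) only the trivial representation $((4k-1)^{2} + 2(4l-1)^{2})^{2} + 0^{2}$. The plan is to realize this factor through $N_{1}$ by choosing $e_{0}, \ldots, e_{7}$ so that $\Re{(\alpha_{1})} = \pm\bigl((4k-1)^{2} + 2(4l-1)^{2}\bigr)$ and $\Im{(\alpha_{1})} = 0$ in Lemma~$\ref{lem:2.3}$, while arranging the remaining norms to multiply to $64$.

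The principal obstacle is producing the explicit $a_{i}$'s. The constraints on $b_{j}$, $c_{j}$, and on $N_{8} N_{16}$ are linear or low-degree and straightforward to solve, but matching $\Re{(\alpha_{1})}$ and $\Im{(\alpha_{1})}$ to prescribed values is a quartic polynomial identity in $e_{0}, \ldots, e_{7}$. A symmetry-reducing ansatz---for instance, setting most $e_{j}$'s to zero (or to $\pm 1$) and parameterizing the remaining ones as linear functions of $k$ and $l$---should collapse Lemma~$\ref{lem:2.3}$ into a checkable algebraic identity. Once the tuples are constructed, the verification of each $N_{d}$ by substitution into Lemmas~$\ref{lem:2.2}$ and $\ref{lem:2.3}$, followed by multiplication, is routine though tedious.
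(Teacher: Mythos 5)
Your overall strategy---exhibit explicit tuples $(a_{0},\ldots,a_{15})$ and verify via the factorization $D=\prod_{d\mid 16}N_{d}$ with Lemmas~\ref{lem:2.2} and \ref{lem:2.3}---is exactly what the paper does, and your targeting for items $(1)$ and $(3)$ (put $(8k+3)^{2}+(8l+1)^{2}$ into $N_{4}$ with the rest multiplying to $32$; put the two displayed squares into $N_{2}$ via $c_{0}=4k-1$, $c_{1}=2l-1$, $c_{2}=4m-2$, $c_{3}=4n$) matches the paper's constructions. But there are two genuine problems. First, the entire content of this lemma's proof \emph{is} the explicit tuples: you have only described a search strategy (``a symmetry-reducing ansatz \ldots should collapse Lemma~\ref{lem:2.3} into a checkable algebraic identity'') without producing any tuple or verifying that one exists, so nothing is actually proved.

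Second, your plan for item $(2)$ is structurally impossible. You propose $\Re(\alpha_{1})=\pm p$ and $\Im(\alpha_{1})=0$ with $p=(4k-1)^{2}+2(4l-1)^{2}$, which makes $N_{1}=p^{2}$ odd, while the remaining norms are to multiply to $64$, hence at least one of them is even. This contradicts Lemma~\ref{lem:3.2}: all five norms $N_{1},N_{2},N_{4},N_{8},N_{16}$ have the same parity, since $f(\zeta_{16}^{k})\equiv f(1)\pmod{1-\zeta_{16}}$. The factor must enter as $2p^{2}=p^{2}+p^{2}$ (real and imaginary parts both $\pm p$), not as $p^{2}+0^{2}$. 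The paper does this through $N_{2}$, choosing the $a_{i}$ so that $c_{0}^{2}-c_{2}^{2}+2c_{1}c_{3}=p$ and $c_{3}^{2}-c_{1}^{2}+2c_{0}c_{2}=-p$, giving $N_{2}=2p^{2}$ with $N_{1}=N_{4}=2$ and $N_{8}N_{16}=8$; this also sidesteps the quartic identity in $e_{0},\ldots,e_{7}$ that you flag as the main obstacle, since the $c_{j}$ enter $N_{2}$ only quadratically. You would need to redesign item $(2)$ along these lines and then actually exhibit and check the three tuples.
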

 
\begin{proof}
First, we prove (1). Let 
\begin{gather*}
a_{0} = k, \: \: 
a_{1} = l, \: \: 
a_{2} = - k, \: \: 
a_{3} = - l, \: \: 
a_{4} = k, \: \: 
a_{5} = l, \: \: 
a_{6} = - k - 1, \: \: 
a_{7} = - l, \\ 
a_{8} = k, \: \: 
a_{9} = l, \: \: 
a_{10} = - k - 1, \: \: 
a_{11} = - l, \: \: 
a_{12} = k, \: \: 
a_{13} = l, \: \: 
a_{14} = - k - 1, \: \: 
a_{15} = - l - 1. 
\end{gather*}
Then 
$D(a_{0}, a_{1}, \ldots, a_{15}) = 32 \left\{ (8 k + 3)^{2} + (8 l + 1)^{2} \right\}$. 
Next, we prove (2). 
Let 
\begin{gather*}
a_{0} = k + l, \: \: 
a_{1} = k - l, \: \: 
a_{2} = 1 - l, \: \: 
a_{3} = 1 - l, \: \: 
a_{4} = 1 - k - l, \: \: 
a_{5} = l - k, \: \: 
a_{6} = l, \: \: 
a_{7} = l, \\ 
a_{8} = k + l, \: \: 
a_{9} = k - l, \: \: 
a_{10} = - l, \: \: 
a_{11} = - l, \: \: 
a_{12} = 1 - k - l, \: \: 
a_{13} = l - k, \: \: 
a_{14} = l, \: \: 
a_{15} = l. 
\end{gather*}
Then 
$D(a_{0}, a_{1}, \ldots, a_{15}) = 64 \left\{ (4 k - 1)^{2} + 2 (4 l - 1)^{2} \right\}^{2}$. 
Finally, we prove (3). 
Let 
\begin{gather*}
a_{0} = k, \: \: 
a_{1} = l', \: \: 
a_{2} = m, \: \: 
a_{3} = n, \: \: 
a_{4} = - k, \: \: 
a_{5} = - l', \: \: 
a_{6} = 1 - m, \: \: 
a_{7} = - n, \\ 
a_{8} = k, \: \: 
a_{9} = l', \: \: 
a_{10} = m, \: \: 
a_{11} = n, \: \: 
a_{12} = 1 - k, \: \: 
a_{13} = (-1)^{l} - l', \: \: 
a_{14} = 1 - m, \: \: 
a_{15} = - n, 
\end{gather*}
where $l' := l / 2$ when $l$ is even; $l' := (l - 1) / 2$ when $l$ is odd. 
Then 
$D(a_{0}, a_{1}, \ldots, a_{15})$ becomes
\begin{align*}
32 \left\{ (4 k - 1)^{2} - (4 m - 2)^{2} + 2 (2 l - 1) 4 n \right\}^{2} + 32 \left\{ (2 l - 1)^{2} - (4 n)^{2} - 2 (4 k - 1) (4 m - 2) \right\}^{2}. 
\end{align*}
\end{proof}
 
\begin{proof}[Proof of Theorem~$\ref{thm:5.1}$]
First, we prove (1). 
Suppose that $p$ is a prime with $p \equiv 5 \pmod{8}$, 
then there exist $r, s \in \mathbb{Z}$ such that 
$$
p = (4 r + 2)^{2} + (4 s + 1)^{2}. 
$$
Here, we can take $r$ and $s$ so that $r \equiv s \pmod{2}$ holds: if $r \not\equiv s \pmod{2}$, then 
\begin{align*}
p = (4(- r' - 1) + 2)^{2} + (4 s + 1)^{2} = (4 r' + 2)^{2} + (4 s + 1)^{2}, 
\end{align*}
where $r' := - r - 1$, and $r' \equiv s \pmod{2}$ holds. 
Put $k := (r + s) / 2$ and $l := (r - s) / 2$. 
Then 
\begin{align*}
2 p 
&= 2 \left\{ (4 r + 2)^{2} + (4 s + 1)^{2} \right\} \\ 
&= \left\{ (4 r + 2) + (4 s + 1) \right\}^{2} + \left\{ (4 r + 2) - (4 s + 1) \right\}^{2} \\ 
&= \left\{ 4 (r + s) + 3 \right\}^{2} + \left\{ 4 (r - s) + 1 \right\}^{2} \\ 
&= (8 k + 3)^{2} + (8 l + 1)^{2}. 
\end{align*}
Therefore, from Lemma~$\ref{lem:5.2}$~$(1)$, we have 
$$
64 p = 32 \left\{ (8 k + 3)^{2} + (8 l + 1)^{2} \right\} \in S(16). 
$$
Next, we prove (2). 
Suppose that $p$ is a prime with $p \equiv 3 \pmod{8}$, 
then $p$ is expressible in the form $a^{2} + 2 b^{2}$, where $a$ and $b$ are odd numbers. 
Thus, there exist $k, l \in \mathbb{Z}$ satisfying 
$$
p = (4 k - 1)^{2} + 2 (4 l - 1)^{2}. 
$$
Therefore, from Lemma~$\ref{lem:5.2}$~$(2)$, we have
$$
64 p^{2} = 64 \left\{ (4 k - 1)^{2} + 2 (4 l - 1)^{2} \right\}^{2} \in S(16). 
$$
Finally, we prove (3). 
For any $v := (r, s, t, u) \in \mathbb{Z}^{4}$, 
let 
$$
h_{v}(x) := r + s x + t x^{2} + u x^{3} \in \mathbb{Z}[x]. 
$$
Suppose that $p$ is a prime with $p = a^{2} + b^{2} \equiv 1 \pmod{8}$ and $a \pm b \in \left\{ 8 m \pm 3 \mid m \in \mathbb{Z} \right\}$, 
then there exists $v = (r, s, t, u) \in \mathbb{Z}^{4}$ with $r + t \not\equiv s + u \pmod{2}$ satisfying
$$
p = h_{v}(\zeta_{8}) h_{v}(\zeta_{8}^{3}) h_{v}(\zeta_{8}^{5}) h_{v}(\zeta_{8}^{7}) = (r^{2} - t^{2} + 2 s u)^{2} + (s^{2} - u^{2} - 2 r t)^{2}. 
$$ 
Here, we can assume that 
$$
r \not\equiv s \equiv t \equiv u \pmod{2}. 
$$
If this condition does not hold, 
then we replace $v$ by 
\begin{align*}
v := 
\begin{cases}
(s, t, u, - r) & \text{if} \:\: s \not\equiv t \equiv u \equiv r \pmod{2}, \\ 
(t, u, - r, - s) & \text{if} \:\: t \not\equiv u \equiv r \equiv s \pmod{2}, \\ 
(u, r, - s, t) & \text{if} \:\: u \not\equiv r \equiv s \equiv t \pmod{2}. 
\end{cases}
\end{align*}
Noting that 
$$
( r^{2} - t^{2} + 2 s u ) + ( s^{2} - u^{2} - 2 r t ) \in \left\{ \pm (a + b), \pm (a - b) \right\} \subset \left\{ 8 m \pm 3 \: \mid \: m \in \mathbb{Z} \right\}, 
$$
we have 
\begin{align*}
\pm 3 
&\equiv 
(r^{2} - t^{2} + 2 s u) + (s^{2} - u^{2} - 2 r t) && \\ 
&\equiv 2 (r^{2} + s^{2}) - (r + t)^{2} - (s - u)^{2} && \\ 
&\equiv 1 - (s - u)^{2} &&\pmod{8}. 
\end{align*}
Thus $s \not\equiv u \pmod{4}$. 
We can assume further that 
\begin{align*}
(t + s, t + u) \equiv (2, 0) \pmod{4}. 
\end{align*}
If this condition does not hold, 
then we replace $v$ by $v := (- r, u, t, s)$. 
Let 
$$
c_{0} := r - u, \quad c_{1} := r + s, \quad c_{2} := t + s, \quad c_{3} := t + u. 
$$
Then, since 
$$
c_{0} \equiv c_{1} \equiv 1 \pmod{2}, \quad c_{2} \equiv 2, \: c_{3} \equiv 0 \pmod{4}, 
$$
there exist $k, l, m, n \in \mathbb{Z}$ such that 
\begin{align*}
(c_{0}, c_{1}, c_{2}, c_{3}) = 
\begin{cases}
(- 4 k + 1, 2 l - 1, - 4 m + 2, 4 n) & \text{if} \: \: c_{0} \equiv 1 \pmod{4}, \\ 
(4 k - 1, 2 l - 1, 4 m - 2, 4 n) & \text{if} \: \: c_{0} \equiv 3 \pmod{4}. 
 \end{cases}
\end{align*}
Since $(1 + \zeta_{8}^{k}) h_{v}(\zeta_{8}^{k}) = c_{0} + c_{1} \zeta_{8}^{k} + c_{2} \zeta_{8}^{2 k} + c_{3} \zeta_{8}^{3 k}$, 
we have 
\begin{align*}
2 p 
&= (1 + \zeta_{8}) (1 + \zeta_{8}^{3}) (1 + \zeta_{8}^{5}) (1 + \zeta_{8}^{7}) h_{v}(\zeta_{8}) h_{v}(\zeta_{8}^{3}) h_{v}(\zeta_{8}^{5}) h_{v}(\zeta_{8}^{7}) \\ 
&= \prod_{k \in \{ 1, 3, 5, 7 \}} ( c_{0} + c_{1} \zeta_{8}^{k} + c_{2} \zeta_{8}^{2 k} + c_{3} \zeta_{8}^{3 k}) \\ 
&= (c_{0}^{2} - c_{2}^{2} + 2 c_{1} c_{3})^{2} + (c_{1}^{2} - c_{3}^{2} - 2 c_{0} c_{2})^{2} \\ 
&= \left\{ (4 k - 1)^{2} - (4 m - 2)^{2} + 2 (2 l - 1) 4 n \right\}^{2} + \left\{ (2 l - 1)^{2} - (4 n)^{2} - 2 (4 k - 1) (4 m - 2) \right\}^{2}. 
\end{align*}
Therefore, from Lemma~$\ref{lem:5.2}$~$(3)$, the proof of Theorem~$\ref{thm:5.1}$~$(3)$ is complete. 
\end{proof}




\clearpage

\bibliography{reference}
\bibliographystyle{plain}

\medskip
\begin{flushleft}
Yuka Yamaguchi\\
Faculty of Education \\ 
University of Miyazaki \\
1-1 Gakuen Kibanadai-nishi\\ 
Miyazaki 889-2192 \\
JAPAN\\
y-yamaguchi@cc.miyazaki-u.ac.jp
\end{flushleft}

\medskip
\begin{flushleft}
Naoya Yamaguchi\\
Faculty of Education \\ 
University of Miyazaki \\
1-1 Gakuen Kibanadai-nishi\\ 
Miyazaki 889-2192 \\
JAPAN\\
n-yamaguchi@cc.miyazaki-u.ac.jp
\end{flushleft}

\end{document}